\newtheorem{theorem}{Theorem}[section]
\newtheorem{lemma}{Lemma}[section]
\theoremstyle{definition}
\newtheorem{definition}{Definition}[section]
\newcommand{\hpqTUPLE}{\mathcal{H}^*_{p,q}}
\newcommand{\N}{\mathbb{N}}
\newcommand{\Cn}{\mathbb{C}^n}
\newcommand{\Sn}{\mathbb{S}^{2n-1}}
\newcommand{\3}{\mathbb{S}^3}
\newcommand{\dbarb}{\overline{\partial}_b}
\newcommand{\dbarbstar}{\overline{\partial}_b^{\ast}}
\newcommand{\conj}[1]{\overline{#1}}
\renewcommand{\l}{\mathcal{L}}
\newcommand{\lbar}{\smash[b]{\conj{\mathcal{L}}}}
\newcommand{\boxb}{\square_b}
\newcommand{\boxbt}{\square_b^t}
\newcommand{\restrict}{\raise-.3ex\hbox{\ensuremath|}}
\newcommand{\HH}{\mathcal{H}}
\newcommand{\LL}[1]{\mathcal{L}_{#1}}
\newcommand{\lt}{\mathcal{L}_t}
\newcommand{\hpq}{\HH_{p,q}}
\newcommand{\dbb}{\overline{\partial}_b} 
\newcommand{\dbba}{\overline{\partial}_b^*} 
\newcommand\CC{\mathbb C}
\newcommand{\heart}{\mathcal{M}}
\title{Spectra of Kohn Laplacians on Spheres}
\author{John Ahn}
\address[John Ahn]{Bowdoin College, 7 Smith Union, Brunswick, ME 04011, USA}
\email{jtahn@bowdoin.edu}
\author{Mohit Bansil}
\address[Mohit Bansil]{Michigan State University, 619 Red Cedar Road C212 Wells Hall, East Lansing, MI 48824, USA}
\email{bansilmo@msu.edu}
\author{Garrett Brown}
\address[Garrett Brown]{Harvard University, 153 Kirkland Mail Center, 95 Dunster Street, Cambridge, MA 02138, USA}
\email{garrettbrown@college.harvard.edu}
\author{Emilee Cardin}
\address[Emilee Cardin]{College of William and Mary, CSU 5358, P.O. Box 8793, Williamsburg, VA 23187, USA}
\email{elcardin@email.wm.edu}
\author{Yunus E. Zeytuncu}
\address[Yunus E. Zeytuncu]{University of Michigan--Dearborn, Department of Mathematics and Statistics, 2048 Evergreen Road, Dearborn, MI 48128, USA}
\email{zeytuncu@umich.edu}
\keywords{Kohn Laplacian, Spherical Harmonics, Ger\u{s}gorin Circle Theorem}
\subjclass[2010]{Primary 32V05; Secondary 32V30}
\thanks{This work is supported by NSF (DMS-1659203). The work of the fourth author is also partially supported by a grant from the Simons Foundation (\#353525).}
\begin{document}
\maketitle
\begin{abstract}
In this note, we study the spectrum of the Kohn Laplacian on the unit spheres in $\mathbb{C}^n$ and revisit Folland's classical eigenvalue computation. We also look at the growth rate of the eigenvalue counting function in this context. Finally, we consider the growth rate of the eigenvalues of the perturbed Kohn Laplacian on the Rossi sphere in $\mathbb{C}^2$. 
\end{abstract}


\section{Introduction}
\subsection{Background}
The unit sphere $\Sn\subset \Cn$ is a CR manifold (of hypersurface type) with the CR structure induced from the ambient space. By following the standard setting we define the tangential Cauchy-Riemann complex with the operators $\dbarb$ and $\dbarbstar$ on the spaces of square integrable $(0,q)$-forms $L^2_{(0,q)}(\Sn)$\footnote{For simplicity we restrict our attention to $(0,q)$ forms instead of $(p,q)$ forms.}. The Kohn Laplacian (or $\dbarb$-Laplacian) 
\begin{equation*}
\boxb=\dbarb\dbarbstar+\dbarbstar\dbarb
\end{equation*}
is a linear, closed, densely defined self-adjoint operator from $L^2_{(0,q)}(\Sn)$ to itself. The analytic properties of this second order differential operator are closely related to the geometry of the underlying manifold (although we work here on $\Sn$ the same setup works on other CR manifolds). We refer the reader to \cite[Chapter 7]{CS01} for the details of this setup.
\subsection{Spherical Harmonics} In this part we list definitions and theorems that are needed in the rest of the paper. For a detailed study of spherical harmonics we refer the reader to \cite{Axler13Harmonic}.

We say a complex polynomial $p(z)$ is homogeneous of degree $k$ if $p(\lambda z)=\lambda^kp(z)$ for all $z\not=0$. Similarly, $p(z,\overline{z})$ is called homogeneous of bidegree $(p,q)$ if $f(\lambda_1 z, \lambda_2 \overline{z})= \lambda_1^p\lambda_2^qp(z,\overline{z})$ for all $z\not=0$. 
We say a twice-differentiable function $f$ is harmonic if $\triangle f=0$ , where the Laplacian is defined by $$\triangle f = 4\sum_{i=1}^n \frac{\partial^2 f}{\partial z_i\partial\overline{z_i}}.$$ 
A spherical harmonic is the restriction to $\Sn$ of a complex polynomial that is harmonic on $\Cn$. 
We use $\mathcal{H}_k(\CC^n)$ to denote the space of all harmonic, homogeneous polynomials of degree $k$ on $\Cn$ and 
$\mathcal{H}_{p,q}(\Cn)$ for the space of all harmonic, homogeneous polynomials of bidegree $(p,q)$. 
Similarly we use $\mathcal{H}_{k}(\Sn)$ and $\mathcal{H}_{p,q}(\Sn)$ to denote the restrictions of these spaces on $\Sn$. The following decomposition theorem is fundamental in our study of $\boxb$ on $L^2(\Sn)$.
\begin{theorem}\cite[Theorem 3.7]{Klima}
	The spaces $\hpq (\Sn)$ are pairwise orthogonal, and 
	\[ L^2 (\Sn) = \bigoplus_{p,q=0}^\infty \hpq(\Sn). \]
\end{theorem}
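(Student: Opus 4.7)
My strategy is to refine the classical spherical harmonic decomposition $L^2(\Sn) = \bigoplus_{k=0}^\infty \mathcal{H}_k(\Sn)$ (pairwise orthogonal, as found in \cite{Axler13Harmonic}) by splitting each eigenspace $\mathcal{H}_k(\Sn)$ according to bidegree. Taking this classical decomposition as the starting point, it suffices to prove two things: that $\mathcal{H}_k(\Sn) = \sum_{p+q=k}\hpq(\Sn)$, and that this sum is orthogonal in $L^2(\Sn)$.

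The first step is purely algebraic. Any homogeneous polynomial of degree $k$ on $\Cn$ decomposes uniquely as $f = \sum_{p+q=k} f_{p,q}$, where $f_{p,q}$ is bihomogeneous of bidegree $(p,q)$. Because $\triangle = 4\sum_{i}\partial^2/\partial z_i\partial\overline{z_i}$ sends bidegree $(p,q)$ to bidegree $(p-1,q-1)$, the equation $\triangle f = 0$ splits bidegree-by-bidegree into $\triangle f_{p,q} = 0$. Thus $\mathcal{H}_k(\Cn) = \bigoplus_{p+q=k}\hpq(\Cn)$ as complex vector spaces; since a harmonic homogeneous polynomial is determined by its restriction to $\Sn$, this gives $\mathcal{H}_k(\Sn) = \sum_{p+q=k}\hpq(\Sn)$.

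The second step is orthogonality. For $f\in\hpq(\Sn)$ and $g\in\mathcal{H}_{p',q'}(\Sn)$ with $(p,q)\ne(p',q')$, I would consider two cases. If $p+q\ne p'+q'$ the functions lie in distinct $\mathcal{H}_k(\Sn)$ and are orthogonal by the classical decomposition. If $p+q = p'+q'$ then necessarily $p-q\ne p'-q'$, and I would exploit the circle action $R_\theta\colon z\mapsto e^{i\theta}z$, which preserves $\Sn$ and its surface measure and multiplies any bihomogeneous polynomial of bidegree $(a,b)$ by $e^{i(a-b)\theta}$. Invariance of the inner product under $R_\theta$ yields
\[
\langle f,g\rangle_{L^2(\Sn)} = e^{i((p-q)-(p'-q'))\theta}\langle f,g\rangle_{L^2(\Sn)}
\]
for every $\theta\in\RR$, which forces $\langle f,g\rangle = 0$.

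The main subtlety I anticipate is that restriction of polynomials from $\Cn$ to $\Sn$ has a nontrivial kernel (generated by $|z|^2-1$), so one must argue that the algebraic direct sum on $\Cn$ genuinely descends to an orthogonal direct sum on $\Sn$ rather than just a sum. This is resolved by the orthogonality argument above: once the summands are verified pairwise orthogonal in $L^2(\Sn)$, the sum is automatically direct, and combining this with the classical density of $\bigoplus_k \mathcal{H}_k(\Sn)$ completes the proof.
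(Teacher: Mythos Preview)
The paper does not give its own proof of this theorem; it is stated in the introductory background section and attributed to \cite[Theorem~3.7]{Klima} without further argument. There is therefore nothing in the paper itself to compare your proposal against.

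That said, your argument is correct and is the standard one. The two key observations---that $\triangle$ maps bidegree $(p,q)$ to $(p-1,q-1)$, so harmonicity of a homogeneous polynomial splits bidegree-by-bidegree, and that the diagonal circle action $z\mapsto e^{i\theta}z$ separates the summands with the same $p+q$ but different $p-q$ via the character $e^{i(p-q)\theta}$---are exactly what one uses to refine the classical real spherical-harmonic decomposition into the bigraded one. Your treatment of the restriction subtlety is also fine: once the summands are pairwise orthogonal in $L^2(\Sn)$, the sum is automatically direct after restriction, regardless of the kernel of the restriction map on all polynomials.
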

By using a standard counting argument one obtains the following formula for the dimensions of the spaces of spherical harmonics.
\begin{lemma}\cite[Corollary 3.10]{Klima}\label{dimension}
	For $p,q \geq 1$,
	\begin{align*} \dim({\mathcal{H}_{p,q}}(\Sn))
	&= \binom{n + p - 1}{p} \binom{n + q - 1}{q}  - \binom{n + p - 2}{p - 1} \binom{n + q - 2}{q - 1}\\
	&=\frac{(n-1)(n+p+q-1)}{pq} \binom{n+p-2}{p-1} \binom{n+q-2}{q-1}.
	\end{align*}
\end{lemma}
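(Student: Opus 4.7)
The plan is to prove the first equality via a rank--nullity argument on the Laplacian acting between bidegree-homogeneous polynomial spaces, then deduce the second equality as routine algebraic manipulation of binomial coefficients.

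First I would introduce $\mathcal{P}_{p,q}(\mathbb{C}^n)$, the full space of polynomials homogeneous of bidegree $(p,q)$, and use a stars-and-bars count on the monomial basis $\{z^\alpha \overline{z}^\beta : |\alpha|=p,\ |\beta|=q\}$ to obtain
\[
\dim \mathcal{P}_{p,q}(\mathbb{C}^n) = \binom{n+p-1}{p}\binom{n+q-1}{q}.
\]
Next I would view the Laplacian $\triangle = 4\sum_i \partial^2/(\partial z_i\,\partial\overline{z}_i)$ as a linear map $\triangle : \mathcal{P}_{p,q}(\mathbb{C}^n) \to \mathcal{P}_{p-1,q-1}(\mathbb{C}^n)$, whose kernel is by definition $\mathcal{H}_{p,q}(\mathbb{C}^n)$. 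The first formula will then follow from the rank--nullity theorem, provided one knows that $\triangle$ is \emph{surjective} onto $\mathcal{P}_{p-1,q-1}(\mathbb{C}^n)$.

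Establishing this surjectivity is the main obstacle. I would handle it by a duality argument using the Fischer inner product on polynomials, under which multiplication by the squared norm $|z|^2 = \sum_i z_i\overline{z}_i$ is adjoint (up to a positive constant) to the Laplacian. Since $|z|^2$ is a nonzero real-valued polynomial and $\mathcal{P}_{p-1,q-1}(\mathbb{C}^n)$ is an integral-domain-like module inside the polynomial ring, multiplication by $|z|^2$ is injective from $\mathcal{P}_{p-1,q-1}$ into $\mathcal{P}_{p,q}$. Surjectivity of $\triangle$ then follows by taking orthogonal complements under the Fischer inner product. This yields
\[
\dim \mathcal{H}_{p,q}(\mathbb{C}^n) = \binom{n+p-1}{p}\binom{n+q-1}{q} - \binom{n+p-2}{p-1}\binom{n+q-2}{q-1}.
\]

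To transfer from $\mathbb{C}^n$ to $\mathbb{S}^{2n-1}$, I would verify that the restriction map $\mathcal{H}_{p,q}(\mathbb{C}^n) \to \mathcal{H}_{p,q}(\mathbb{S}^{2n-1})$ is injective: if a harmonic polynomial vanishes on the unit sphere, homogeneity forces it to vanish on every sphere of radius $r>0$, hence on all of $\mathbb{C}^n$. Surjectivity holds by definition. Thus the dimensions agree. Finally, the second equality is a direct algebraic simplification: factor $\binom{n+p-2}{p-1}\binom{n+q-2}{q-1}$ out of the difference, rewrite $\binom{n+p-1}{p} = \frac{n+p-1}{p}\binom{n+p-2}{p-1}$ and similarly for the $q$-terms, and combine the resulting fraction $\frac{(n+p-1)(n+q-1) - pq}{pq}$, which collapses to $\frac{(n-1)(n+p+q-1)}{pq}$ after expansion.
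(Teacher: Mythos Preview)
Your argument is correct and is the standard one. Note, however, that the paper does not actually supply a proof of this lemma: it is quoted verbatim from \cite[Corollary~3.10]{Klima} and prefaced only with the remark that ``a standard counting argument'' yields the formula. So there is no in-paper proof to compare against; your rank--nullity approach (with surjectivity of $\triangle$ established via the Fischer inner product and the injectivity of multiplication by $|z|^2$) is precisely the standard argument the paper is alluding to, and the algebraic verification that $(n+p-1)(n+q-1)-pq=(n-1)(n+p+q-1)$ is clean.
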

\vskip 1cm

\subsection{Notation} In the rest of the note we use the standard $\Omega$ and $O$ notation to denote asymptotic lower and upper bounds, respectively. That is, given two functions $f$ and $g$, we say $f = \Omega(g)$ if there exists a constant $c>0$ such that $f(x) \geq c g(x)$ as $x \rightarrow \infty$. Similarly, $f = O(g)$ if there exists $c>0$ such that $f(x) \leq c g(x)$ as $x \rightarrow \infty$. Finally, we say $f = \Theta(g)$ if $f = \Omega(g)$ and $f = O(g)$.

\subsection{Results}

In \cite{Folland}, Folland computes the eigenvalues and eigenforms of $\boxb$ on $L^2_{(0,q)}(\Sn)$ by using unitary representations. 
\begin{theorem}\label{Fol}
	$\hpq \left(\Sn\right)$ is an eigenspace for $\dbba \dbb$ with the associated eigenvalue $2q(p+n-1)$.
\end{theorem}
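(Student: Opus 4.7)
The plan is to combine representation theory with a single hands-on computation. First, I would reduce the claim to showing that $\Box_b = \dbba\dbb$ acts on $\hpq(\Sn)$ by the scalar $2q(p+n-1)$. This is already the form of the statement because $\dbba$ vanishes on $(0,0)$-forms (there is no $(0,-1)$-form), so $\Box_b = \dbba\dbb$ on functions.

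The key structural input is $U(n)$-equivariance. The standard action of $U(n)$ on $\Cn$ restricts to an action on $\Sn$ which preserves the CR structure; in particular it commutes with $\dbb$, with the Levi inner product used to define $\dbba$, and hence with $\Box_b$. By a classical computation (decompose the polynomial algebra under $U(n)$ using that $\Cn$ is the standard representation), the space $\hpq(\Cn)$ is an irreducible unitary representation of $U(n)$, and restriction to $\Sn$ is a $U(n)$-equivariant isomorphism onto $\hpq(\Sn)$. Schur's lemma then forces $\Box_b$ to act on $\hpq(\Sn)$ as a scalar $\lambda_{p,q}$, which depends only on $p,q,n$.

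It remains to compute $\lambda_{p,q}$ by evaluating on a convenient test element. I would take $f=z_1^p\,\overline{z}_n^q$ (assuming $n\geq 2$; the case $n=1$ only has $\hpq$ nontrivial for $p=0$ or $q=0$ and is handled separately). One checks directly that $\triangle f=0$ since the cross terms $\partial^2/\partial z_i\partial\overline{z}_i$ all annihilate $f$, so $f\in\hpq(\Cn)$. To avoid writing $\dbba$ explicitly, I would use
\[
\lambda_{p,q}\,\|f\|^2_{L^2(\Sn)}=\langle \dbba\dbb f,f\rangle=\|\dbb f\|^2_{L^2(\Sn)},
\]
and compute the right-hand side in the frame $L_{jk}=z_j\partial/\partial\overline{z}_k-z_k\partial/\partial\overline{z}_j$ ($j<k$) of $(0,1)$ CR vector fields on $\Sn$. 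The inner product on $\Lambda^{0,1}$ is the Levi metric induced from Euclidean $\Cn$, in which the dual coframe to $\{L_{jk}\}$ is computable; after accounting for the non-orthonormality of the $L_{jk}$, $\|\dbb f\|^2$ reduces to an explicit integral over $\Sn$ of absolute values of polynomials in $z,\overline{z}$.

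The main obstacle is precisely this last step: one must carefully carry out the Gram-matrix bookkeeping for the frame $\{L_{jk}\}$ in the Levi metric, and then evaluate the resulting sphere integrals $\int_\Sn |z_1|^{2a}|z_n|^{2b}\,d\sigma$ via the classical beta-function formula for monomial integrals on $\Sn$. Collecting the combinatorial prefactors coming from $p$ applications of $\partial/\partial z_1$-type terms and $q$ applications of $\partial/\partial\overline{z}_n$-type terms and canceling $\|f\|^2$ on both sides yields $\lambda_{p,q}=2q(p+n-1)$. An alternative, perhaps cleaner, implementation of this last computation would be to write $\Box_b$ on functions as a second-order differential operator in the $L_{jk}$, $\bar L_{jk}$, and a transverse direction $T$, and apply it directly to $f$; since by Schur's lemma all derivatives other than the scalar part are guaranteed to cancel, one only needs to read off the zeroth-order coefficient after substituting $f$.
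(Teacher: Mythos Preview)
Your strategy is valid but takes a genuinely different route from the paper. The paper gives a purely elementary, direct computation: starting from the explicit coordinate formulas for $\dbb$ and $\dbba$ (the latter obtained via integration by parts in Lemma~\ref{actionDBBA}), it applies $\dbba\dbb$ to an \emph{arbitrary} $f\in\hpq(\Sn)$ and simplifies using only the Euler identities $\sum z_k\partial_{z_k}f=pf$, $\sum \overline{z}_k\partial_{\overline{z}_k}f=qf$, the harmonicity $\sum \partial^2 f/\partial z_i\partial\overline{z}_i=0$, and the relation $\sum z_i\overline{z}_i=1$ on $\Sn$. Four short product-rule computations collapse the expression to $2q(p+n-1)f$. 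No representation theory is invoked, and the argument simultaneously shows both that $\hpq$ is an eigenspace and what the eigenvalue is.

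Your approach front-loads the ``eigenspace'' part by citing $U(n)$-irreducibility of $\hpq(\Sn)$ and Schur's lemma, which is conceptually clean and explains \emph{why} one expects a scalar without any calculation; this is essentially Folland's original viewpoint, which the paper deliberately set out to bypass. The cost is that you defer the actual eigenvalue to a computation of $\|\dbb f\|^2/\|f\|^2$ on a test vector, and the route you sketch---working in the overdetermined frame $L_{jk}$, inverting its Gram matrix in the Levi metric, and reducing to monomial sphere integrals---is in practice more laborious than the paper's four-line algebra, and you do not carry it out. Your closing alternative (write $\Box_b$ in coordinates and apply it to $f$) is in fact exactly what the paper does, except the paper does it for general $f$ and thereby dispenses with the Schur step entirely. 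In short: your reduction is correct and illuminating, but the paper's elementary calculation is both shorter and complete.
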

In the second section of this note we go over these computations on the space of square integrable functions (i.e. $L^2(\Sn)$) by using spherical harmonics and present eigenvalue computations in an accessible way. This more elementary approach enables us to write a code\footnote{The code can be downloaded at \texttt{https://goo.gl/kBsUzA}.} in \texttt{SymPy} that computes the eigenvalues of $\boxb$ and other similar second order differential operators defined on $L^2(\Sn)$. Furthermore, by using the explicit forms of the eigenvalues and formulas for the dimensions of spherical harmonic subspaces of $L^2(\Sn)$, we study the growth rate for the counting function of the eigenvalues. For $m\in\mathbb{Z}$, let $N(m)$ be the number of eigenvalues of $\boxb$ on $L^2(\Sn)$ that are less or equal than $m$ with counting multiplicity.

\begin{theorem}{\label{countB}}
There exists a real $c > 0$ so that $ \frac{1}{c} m^n \leq N(m) \leq c m^n$, i.e. $N(m)=\Theta(m^n).$
\end{theorem}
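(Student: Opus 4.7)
The plan is to convert $N(m)$ into a sum of dimensions of spherical harmonic subspaces using the known spectrum, and then estimate that sum directly. By the decomposition theorem and the $L^2(\Sn)$ version of Theorem~\ref{Fol} (to be established in the next section), each $\hpq(\Sn)$ lies in the $\lambda_{p,q}$-eigenspace of $\boxb$ with $\lambda_{p,q}=2q(p+n-1)$; in particular, the $0$-eigenspace (where $q=0$) is infinite-dimensional, so $N(m)$ should be read as counting eigenvalues in $(0,m]$ with multiplicity. Thus
\[
N(m)=\sum_{\substack{p\geq 0,\ q\geq 1\\ 2q(p+n-1)\leq m}}\dim\hpq(\Sn),
\]
and one may assume $n\geq 2$, since $\hpq(\SS^1)=0$ whenever $p,q\geq 1$.

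For the lower bound I would keep only the slice $q=1$. The constraint reduces to $p\leq m/2-n+1$, and Lemma~\ref{dimension} gives $\dim\mathcal{H}_{p,1}(\Sn)=\frac{(n-1)(n+p)}{p}\binom{n+p-2}{p-1}$, a polynomial in $p$ of degree $n-1$. The standard power-sum estimate $\sum_{p\leq P}p^{n-1}\sim P^n/n$ then already produces $N(m)=\Omega(m^n)$.

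For the upper bound I would bound the dimension via the second form in Lemma~\ref{dimension}, together with the polynomial estimate $\binom{n+p-2}{p-1}\leq C(p+1)^{n-1}$, to obtain, for $p,q\geq 1$,
\[
\dim\hpq(\Sn)\leq C\,\frac{p+q}{pq}\,(p+1)^{n-1}(q+1)^{n-1}\leq C'\bigl(p^{n-1}q^{n-2}+p^{n-2}q^{n-1}\bigr).
\]
Fixing $q$ and summing over $1\leq p\leq P(q)\approx m/(2q)$ produces an inner sum of order $m^n/q^2+m^{n-1}$. Summing over $1\leq q\lesssim m/(2(n-1))$, the first contribution is $O(m^n)$ because $\sum 1/q^2$ converges, and the second is $O(m)\cdot m^{n-1}=O(m^n)$. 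The boundary slice $p=0$, where $\dim\mathcal{H}_{0,q}(\Sn)=\binom{n+q-1}{q}=\Theta(q^{n-1})$ and $q\leq m/(2(n-1))$, contributes an extra $\Theta(m^n)$, still within the target.

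The main obstacle is getting $O(m^n)$ rather than $O(m^n\log m)$ in the upper bound. The crude bound $\dim\hpq=O((pq)^{n-1})$ yields inner sums of size $m^n/q$, which sum to $\Theta(m^n\log m)$; one must exploit the factor $(n+p+q-1)/(pq)$ from Lemma~\ref{dimension}, which improves the decay in $q$ from $1/q$ to $1/q^2$ after summing in $p$ and replaces the logarithmic divergence with a convergent tail. Once this refined dimension estimate is in hand, the remainder of the argument is a routine double summation.
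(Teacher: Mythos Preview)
Your argument is correct, and the lower bound via the $q=1$ slice is essentially the paper's own. The upper bound, however, is obtained by a genuinely different route. The paper telescopes $N(m)-N(m-2)$, observes that the multiplicity at level $m$ is (asymptotically) $\sum_{pq=m/2}(p+q)(pq)^{n-2}=O(m^{n-2})\,\sigma(m)$, and then invokes the classical average-order estimate $\sum_{x\le m}\sigma(x)=O(m^2)$ from analytic number theory to sum the increments. Your approach instead sums $\dim\hpq$ directly over the sublevel region, exploiting the factor $(n+p+q-1)/(pq)$ in Lemma~\ref{dimension} to upgrade the decay in $q$ from $1/q$ to $1/q^2$ after summing in $p$, so that the resulting series converges without a logarithmic loss. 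This is more elementary---it avoids any appeal to divisor-sum asymptotics---and makes transparent exactly why the crude bound $\dim\hpq=O((pq)^{n-1})$ falls one logarithm short. The paper's method, on the other hand, packages the same cancellation into a single number-theoretic fact and would adapt more readily to situations where only level-set information on the spectrum is available.
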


In other words, here we prove that $$\limsup_{m\to\infty}\frac{N(m)}{m^n}\in (0,\infty).$$ It would be interesting to compute the exact limit and check if it is related to the surface area of $\Sn$. Indeed, in the case of the Laplace-Beltrami operator Weyl's law states that this ratio is the surface area of $\Sn$.

In addition to the induced CR structure from the ambient manifold, one can define different intrinsic CR structures on a given manifold, see \cite[Chapter 8]{Boggess91CR}. The most famous example of these abstract CR manifolds is the Rossi sphere. It is known that the Rossi sphere is not globally CR embeddable into any $\Cn$ \cite{Burns79}. This can be seen by explicitly studying the perturbed Kohn Laplacian (defined by the abstract CR structure) and looking at its essential spectrum. In \cite{REU17} authors studied the bottom of the spectrum of the perturbed Kohn Laplacian by using spherical harmonics. In the last section of this note we continue this study and provide the growth rate of the largest eigenvalues from each subspace of spherical harmonics.



\section{Eigenvalues of $\boxb$ on $L^2(\Sn)$}

\subsection{Explicit Eigenvalue Computation} 

In this section we study the eigenvalues of $\boxb$ on $L^2(\Sn)$. Since $\dbarbstar$ is identically zero on $L^2(\Sn)$, $\boxb$ simplifies on $L^2(\Sn)$ as
\begin{equation*}
\boxb=\dbarbstar\dbarb.
\end{equation*}
Before we compute the eigenvalues we present the operators $\dbarb$ and $\dbarbstar$ in coordinate forms. A smooth differential $1$-form $\omega$ on $\Sn$ can be expressed as
\[\omega= \sum_{k =1}^n ( A_k dz_k + B_k d \overline{z}_k ) = A_1  dz_1 +  B_1 d \overline{z}_1 + \cdots + A_n dz_n + B_n d \overline{z}_n \]
where $A_k, B_k \in \mathcal{C}^\infty(\CC^n)$. 
As computed in \cite{Folland}, for a smooth function $f$ on $\Sn$ we have
\[ \dbb f = \sum_{i=1}^n \Big( \frac{\partial f}{\partial \overline{z}_i} - z_i \sum_{a=1}^n \overline{z}_a \frac{\partial f}{\partial \overline{z}_a}   \Big) d\overline{z}_i.  \]
Furthermore, following the normalization of inner products as in \cite{Folland} we have
\begin{align*}
\langle  d\overline{z}_i,  d\overline{z}_j \rangle &= 2 \delta_{ij}\\
\langle dz_i, d \overline{z}_j \rangle &= 0.
\end{align*} 
Using integration by parts, we obtain the following expression for the adjoint operator.

\begin{lemma}{\label{actionDBBA}}
For a smooth $1$-form $\omega=\sum_{k =1}^n ( A_k dz_k + B_k d \overline{z}_k )$,
\[  \dbba \omega=  -2 \sum_{i=1}^n \Big( \frac{\partial}{\partial z_i} B_i - \sum_{a=1}^n \frac{\partial}{\partial z_a} z_a \overline{z}_i B_i \Big). \]
\end{lemma}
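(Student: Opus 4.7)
The plan is to derive the formula for $\dbba$ from the defining adjoint relation
$\langle \dbb f, \omega\rangle_{L^2_{(0,1)}(\Sn)} = \langle f, \dbba\omega\rangle_{L^2(\Sn)}$,
which must hold for every smooth test function $f$ on $\Sn$. The strategy is to expand the left-hand side explicitly, integrate by parts on $\Sn$ to move every derivative off $f$ and onto the coefficients of $\omega$, and then read off $\dbba\omega$ by comparing integrands.

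First, I would expand the pairing using the coordinate formula for $\dbb f$ together with the stated normalizations $\langle d\overline z_i, d\overline z_j\rangle = 2\delta_{ij}$ and $\langle dz_i, d\overline z_j\rangle = 0$. The $A_k\, dz_k$ components of $\omega$ are annihilated, leaving
\begin{equation*}
\langle \dbb f, \omega\rangle = 2\sum_{i=1}^n \int_{\Sn}\left(\frac{\partial f}{\partial\overline z_i} - z_i\sum_{a=1}^n \overline z_a \frac{\partial f}{\partial\overline z_a}\right)\overline{B_i}\, d\sigma.
\end{equation*}
At this point it is useful to notice that the combination $\overline L_i := \frac{\partial}{\partial\overline z_i} - z_i\sum_a \overline z_a\frac{\partial}{\partial\overline z_a}$ annihilates the defining function $\sum_a z_a\overline z_a - 1$, so it is tangent to $\Sn$; this guarantees the integrand genuinely depends only on the restriction of $f$ to $\Sn$.

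Next I would apply integration by parts to move $\partial/\partial\overline z_i$ off $f$ and onto $\overline{B_i}$, and to move $\partial/\partial\overline z_a$ off $f$ and onto $z_i\overline z_a \overline{B_i}$. Taking the overall complex conjugate then converts $\partial/\partial\overline z$'s into $\partial/\partial z$'s and $\overline{B_i}$'s into $B_i$'s; grouping the two resulting pieces and comparing with $\int_{\Sn} f\cdot\overline{\dbba\omega}\, d\sigma$ should produce exactly
\begin{equation*}
\dbba\omega = -2\sum_{i=1}^n \left(\frac{\partial}{\partial z_i}B_i - \sum_{a=1}^n \frac{\partial}{\partial z_a}(z_a\overline z_i B_i)\right),
\end{equation*}
with the overall factor of $-2$ coming from one $-1$ per integration by parts and one factor of $2$ from the inner product normalization.

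The main technical obstacle is the integration by parts itself: although $\overline L_i$ is tangential, the summands $\partial/\partial\overline z_i$ and $z_i\overline z_a\partial/\partial\overline z_a$ are individually not tangent to $\Sn$, so one cannot integrate by parts with them on the sphere in isolation. The cleanest way around this is to extend $f$ and the $B_i$ smoothly to a neighborhood of $\Sn$ in $\CC^n$, apply the Euclidean divergence theorem on the closed unit ball, and verify that the non-tangential (radial) contributions of the two summands cancel against each other by virtue of the tangentiality of their difference. Once this is justified, the remaining algebraic regrouping that yields the stated formula is routine.
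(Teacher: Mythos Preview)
Your approach is essentially the same as the paper's: both use the defining adjoint relation and integration by parts, the only cosmetic difference being that the paper computes $\langle \dbba\omega, g\rangle = \langle \omega, \dbb g\rangle$ (the conjugate of your pairing) and reads off the formula directly. In fact you are more careful than the paper on one point: the paper simply asserts ``since we are working on a compact manifold, we don't get any boundary terms when we integrate by parts'' and moves $\partial/\partial\overline z_i$ termwise, whereas you correctly flag that these individual operators are not tangential to $\Sn$ and propose to justify the step by extending to a neighborhood and checking that radial contributions cancel.
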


\begin{proof}
Let $g$ be a smooth function on $\Sn$. Since we are working on a compact manifold, we don't get any boundary terms when we integrate by parts.
\begin{align*}
    \Big\langle \dbba & \Big( \sum_{k =1}^n ( A_k dz_k + B_k d \overline{z}_k ) \Big), g \Big\rangle \\
    &= \Big\langle  \sum_{k =1}^n ( A_k dz_k + B_k d \overline{z}_k ) , \dbb g \Big\rangle \\
    &= \Big\langle \sum_{k =1}^n  A_k dz_k + \sum_{k =1}^n  B_k d \overline{z}_k , \sum_{i=1}^n \Big( \frac{\partial g}{\partial \overline{z}_i} - z_i \sum_{a=1}^n \overline{z}_a \frac{\partial g}{\partial \overline{z}_a}   \Big) d\overline{z}_i  \Big\rangle \\
    &= 2 \sum_{i=1}^n \Big\langle B_i ,  \frac{\partial g}{\partial \overline{z}_i} - z_i \sum_{a=1}^n \overline{z}_a \frac{\partial g}{\partial \overline{z}_a} \Big\rangle \\
    &= 2 \sum_{i=1}^n \Big( \Big\langle B_i ,  \frac{\partial g}{\partial \overline{z}_i}  \Big\rangle - \sum_{a=1}^n \Big\langle B_i , z_i \overline{z}_a \frac{\partial g}{\partial \overline{z}_a}  \Big\rangle \Big)\\
    &= 2 \sum_{i=1}^n \Big( -\Big\langle \frac{\partial}{\partial z_i} B_i ,  g \Big\rangle + \sum_{a=1}^n \Big\langle  \frac{\partial}{\partial z_a} z_a \overline{z}_i B_i , g   \Big\rangle \Big)\\
    &= -2 \sum_{i=1}^n \Big( \Big\langle \frac{\partial}{\partial z_i} B_i ,  g \Big\rangle - \sum_{a=1}^n \Big\langle  \frac{\partial}{\partial z_a} z_a \overline{z}_i B_i , g   \Big\rangle \Big)\\
    &= -2 \sum_{i=1}^n \Big\langle \frac{\partial}{\partial z_i} B_i - \sum_{a=1}^n \frac{\partial}{\partial z_a} z_a \overline{z}_i B_i , g   \Big\rangle \\
    &= \Big\langle -2 \sum_{i=1}^n \Big( \frac{\partial}{\partial z_i} B_i - \sum_{a=1}^n \frac{\partial}{\partial z_a} z_a \overline{z}_i B_i \Big), g   \Big\rangle
\end{align*}
By comparing the beginning and end of the identity we prove the lemma.
\end{proof}

Before we look at the action of $\boxb$ on a square integrable function we look at the action of two other operations on the spherical harmonics.

\begin{lemma}
If $f \in \hpq (\Sn)$, then
\[ \sum_{k=1}^n z_k \frac{\partial f}{\partial z_k} = pf \qquad \qquad \textrm{and} \qquad \qquad \sum_{k=1}^n \overline{z}_k \frac{\partial f}{\partial \overline{z}_k}  = qf. \]
\end{lemma}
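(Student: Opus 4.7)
The plan is to recognize this as the bidegree version of Euler's identity for homogeneous polynomials, applied separately in the holomorphic and antiholomorphic variables. Since an element of $\hpq(\Sn)$ is by definition the restriction of a harmonic polynomial $F(z,\bar z)$ that is homogeneous of bidegree $(p,q)$, the cleanest path is to verify the identity at the level of $F$ on $\Cn$ and observe that both sides are themselves homogeneous of bidegree $(p,q)$ (hence agree on $\Sn$ iff they agree on $\Cn$).

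First I would expand $F$ in multi-index notation as
\[
F(z,\bar z)=\sum_{|\alpha|=p,\,|\beta|=q} c_{\alpha\beta}\, z^{\alpha}\bar z^{\beta},
\]
which is legitimate because $F$ is a polynomial that is simultaneously homogeneous of degree $p$ in $z$ and of degree $q$ in $\bar z$. Differentiating term-by-term in $z_k$ and multiplying by $z_k$ gives
\[
z_k\frac{\partial}{\partial z_k}\bigl(z^{\alpha}\bar z^{\beta}\bigr)=\alpha_k\, z^{\alpha}\bar z^{\beta},
\]
so summing over $k$ produces a factor of $|\alpha|=p$, yielding $\sum_k z_k\,\partial F/\partial z_k = pF$. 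The antiholomorphic identity follows by the identical computation with $\alpha_k$ replaced by $\beta_k$ and $|\beta|=q$.

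Alternatively, and perhaps more elegantly, one can differentiate the bihomogeneity relation
\[
F(\lambda_1 z,\lambda_2 \bar z)=\lambda_1^{p}\lambda_2^{q}\,F(z,\bar z)
\]
in $\lambda_1$ at $\lambda_1=\lambda_2=1$ to obtain the first identity, and in $\lambda_2$ at the same point to obtain the second. This avoids expanding in monomials.

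There is essentially no obstacle here; the only small subtlety is that the operators $\sum_k z_k\,\partial/\partial z_k$ and $\sum_k \bar z_k\,\partial/\partial \bar z_k$ are not tangential to $\Sn$, so one must be careful that the statement is interpreted via the canonical harmonic polynomial extension of $f$ to $\Cn$. Because both sides of each asserted equality are polynomials that are homogeneous of the same bidegree, once equality is established on $\Cn$ the restriction to $\Sn$ is automatic, and no further argument is needed. The harmonicity of $F$ plays no role in this lemma; it is purely a consequence of bihomogeneity.
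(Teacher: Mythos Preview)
Your proposal is correct and your primary argument---reducing to monomials $z^{\alpha}\bar z^{\beta}$, computing $z_k\,\partial_{z_k}(z^{\alpha}\bar z^{\beta})=\alpha_k z^{\alpha}\bar z^{\beta}$, summing to get $|\alpha|=p$, and extending by linearity---is exactly the paper's proof. Your additional remarks (the alternative via differentiating the bihomogeneity relation in $\lambda_1,\lambda_2$, the caveat about non-tangentiality and working with the harmonic extension, and the observation that harmonicity is irrelevant here) are all correct and go a bit beyond what the paper records, but the core argument is the same.
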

\begin{proof}
Consider a polynomial $f \in \hpq$. So $f$ is harmonic homogeneous of bidegree $p, q$. Then for each monomial term $g=z_1^{\alpha_1}\cdots z_n^{\alpha_n}\overline{z}_1^{\beta_1}\cdots \overline{z}_n^{\beta_n}$ of $f$, we have:
\[ \sum_{k=1}^n z_k \frac{\partial g}{\partial z_k} = \sum_{k=1}^n (\alpha_k ) g  = \Big( \sum_{k=1}^n \alpha_k \Big) g = p g \]

\[ \sum_{k=1}^n \overline{z}_k \frac{\partial g}{\partial \overline{z}_k} = \sum_{k=1}^n (\beta_k ) g  = \Big( \sum_{k=1}^n \beta_k \Big) g = q g. \]
So each monomial term $g$ is scaled by $p$ (resp. $q$). By the linearity of differential operators, $f$ is scaled by $p$ (resp. $q$) as well.  
\end{proof}

By combining the lemmas above we obtain the eigenvalues of $\boxb$.

\begingroup
\def\thetheorem{\ref{Fol}}
\begin{theorem}
	$\hpq \left(\Sn\right)$ is an eigenspace for $\dbba \dbb$ with the associated eigenvalue $2q(p+n-1)$.
\end{theorem}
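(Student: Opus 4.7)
The plan is to apply the explicit formulas for $\dbb$ and $\dbba$ given above to a representative $f \in \hpq(\Sn)$, viewed as the restriction of a harmonic bihomogeneous polynomial on $\Cn$, and then collect terms using the two identities from the preceding lemma together with the harmonicity of $f$ and the restriction condition $|z|^2 = 1$ on $\Sn$.

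First, I would compute $\dbb f$. Since $\sum_{a=1}^n \overline{z}_a \frac{\partial f}{\partial \overline{z}_a} = q f$, Folland's formula gives
\[
\dbb f = \sum_{i=1}^n B_i\, d\overline{z}_i, \qquad B_i := \frac{\partial f}{\partial \overline{z}_i} - q\,z_i f.
\]
Next, I would plug these coefficients $B_i$ into Lemma \ref{actionDBBA} to obtain
\[
\dbba \dbb f \;=\; -2 \sum_{i=1}^n \frac{\partial B_i}{\partial z_i} \;+\; 2 \sum_{i,a=1}^n \frac{\partial}{\partial z_a}\bigl(z_a \overline{z}_i B_i\bigr).
\]

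The two sums are then evaluated term by term. For the first, a direct differentiation yields
\[
\sum_{i=1}^n \frac{\partial B_i}{\partial z_i} = \sum_{i=1}^n \frac{\partial^2 f}{\partial z_i \partial \overline{z}_i} - nq f - q \sum_{i=1}^n z_i \frac{\partial f}{\partial z_i} = -q(n+p)f,
\]
where I used harmonicity ($\triangle f = 0$) to kill the mixed second-derivative sum and the homogeneity identity $\sum_i z_i \partial_{z_i} f = pf$. For the second sum, expanding the product rule and then invoking both homogeneity identities along with the commutation $\sum_a z_a \partial_{z_a}\partial_{\overline z_i} f = \partial_{\overline z_i}(pf) = p\,\partial_{\overline z_i} f$ gives
\[
\sum_{i,a} \frac{\partial}{\partial z_a}\bigl(z_a \overline{z}_i B_i\bigr) = n\sum_i \overline{z}_i B_i + \sum_i \overline{z}_i \sum_a z_a \frac{\partial B_i}{\partial z_a} = n(qf - q|z|^2 f) + (pqf - q|z|^2 f - pq|z|^2 f).
\]
Restricting to $\Sn$, where $|z|^2 = 1$, this collapses to $-qf$.

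Putting the two pieces together and restricting to $\Sn$,
\[
\dbba \dbb f \;=\; -2\bigl(-q(n+p)f\bigr) + 2(-qf) \;=\; 2q(p+n-1)\,f,
\]
which is the claimed eigenvalue. The only real bookkeeping obstacle is the second sum, where one must carefully track which factors carry $|z|^2$ so that the restriction to $\Sn$ produces the correct $-qf$ rather than $0$; once the homogeneity identities of the preceding lemma are used, the identity falls out algebraically.
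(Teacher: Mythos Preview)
Your proof is correct and follows essentially the same approach as the paper: compute $\dbb f$ using the homogeneity identity, apply the explicit formula for $\dbba$, and simplify using harmonicity, the Euler identities, and $|z|^2=1$ on $\Sn$. The only difference is organizational—the paper splits $B_i$ into its two summands first and tracks four terms separately, whereas you keep $B_i$ intact and handle two sums; both routes land on $2q(p+n-1)f$ by the same ingredients.
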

\addtocounter{theorem}{-1}
\endgroup

\begin{proof}
For $f \in \hpq(\Sn)$:
\begin{align*}
\dbba \dbb f 
&= \dbba \Bigr[  \sum_{i=1}^n \Big( \frac{\partial f}{\partial \overline{z}_i} - z_i \sum_{a=1}^n \overline{z}_a \frac{\partial f}{\partial \overline{z}_a}   \Big)  d\overline{z}_i \Bigr] \\
&= \dbba \Bigr[  \sum_{i=1}^n \Big( \frac{\partial f}{\partial \overline{z}_i} - z_i q f   \Big)  d\overline{z}_i \Bigr]  \\
&= -2 \sum_{i=1}^n \Bigr[ \frac{\partial}{\partial z_i} \Big( \frac{\partial f}{\partial \overline{z}_i} - z_i q f   \Big) - \sum_{a=1}^n \frac{\partial }{\partial z_a} z_a \overline{z}_i \Big( \frac{\partial f}{\partial \overline{z}_i} - z_i q f   \Big) \Bigr] \\
&= -2 \sum_{i=1}^n \Bigr[ \Big( \frac{\partial^2 f}{\partial z_i \partial \overline{z}_i} - \frac{\partial}{\partial z_i} z_i q f   \Big) - \sum_{a=1}^n \Big( \frac{\partial }{\partial z_a} z_a \overline{z}_i \frac{\partial f}{\partial \overline{z}_i} - \frac{\partial }{\partial z_a} z_a \overline{z}_i  z_i q f   \Big) \Bigr] \\
&= -2 \sum_{i=1}^n \frac{\partial^2 f}{\partial z_i \partial \overline{z}_i} + 
2 \sum_{i=1}^n \frac{\partial}{\partial z_i} z_i q f + 
2 \sum_{i=1}^n \sum_{a=1}^n \frac{\partial}{\partial z_a} z_a \overline{z}_i \frac{\partial f}{\partial \overline{z}_i} 
- 2 \sum_{i=1}^n \sum_{a=1}^n \frac{\partial}{\partial z_a} z_a \overline{z}_i  z_i q f. 
\end{align*}
We start the first term. Because $f$ is harmonic, we know $0 = \triangle(f) = 4\sum_{i=1}^n  \frac{\partial^2 f}{\partial z_i \partial \overline{z}_i}$. Thus, we have
\[ 0 = \sum_{i=1}^n  \frac{\partial^2 f}{\partial z_i \partial \overline{z}_i} = -2 \sum_{i=1}^n \frac{\partial^2 f}{\partial z_i \partial \overline{z}_i}. \]
Now, for the second and third terms, we apply the product rule.

\begin{align*}
2 \sum_{i=1}^n \frac{\partial}{\partial z_i} z_i q f &= 2q \sum_{i=1}^n \frac{\partial}{\partial z_i} z_i f \\
&= 2q \sum_{i=1}^n \Big( z_i \frac{\partial f}{\partial z_i} + f \Big) \\
&= 2q \Bigr[ \sum_{i=1}^n  z_i \frac{\partial f}{\partial z_i} + \sum_{i=1}^n f  \Bigr] \\
&= 2q (p + n )f
\end{align*}

\begin{align*}
2 \sum_{i=1}^n \sum_{a=1}^n \frac{\partial}{\partial z_a} z_a \overline{z}_i \frac{\partial f}{\partial \overline{z}_i}  &= 2  \sum_{a=1}^n \frac{\partial}{\partial z_a} z_a  \sum_{i=1}^n \overline{z}_i \frac{\partial f}{\partial \overline{z}_i}  \\
&= 2  \sum_{a=1}^n \frac{\partial}{\partial z_a} z_a  q f \\
&= 2q \sum_{a=1}^n \Big( z_a \frac{\partial f}{\partial z_a} + f \Big)  \\
&= 2q (p + n )f 
\end{align*}

Now recall that on $\Sn$, we have $z_1 \overline{z}_1 + \cdots + z_n \overline{z}_n = 1$. Thus,

\[ \sum_{a=1}^n \sum_{i=1}^n z_i \overline{z}_i f  = \sum_{a=1}^n f = nf.  \]

We also go over the following explicit computation (again by using linearity we can assume $f$ is a monomial and $f=z_1^{\alpha_1}\cdots z_n^{\alpha_n}\overline{z}_1^{\beta_1}\cdots \overline{z}_n^{\beta_n}$):
\begin{align*}
\sum_{a=1}^n z_a \frac{\partial}{\partial z_a} \sum_{i=1}^n z_i \overline{z}_i f 
&= \sum_{a=1}^n z_a \frac{\partial}{\partial z_a} \Big( z_1 \overline{z}_1 + \cdots + z_n \overline{z}_n \Big) f \\
&= \sum_{a=1}^n z_a \Big( \frac{\partial}{\partial z_a} z_1 \overline{z}_1 f + \cdots + \frac{\partial}{\partial z_a} z_a \overline{z}_a f + \cdots + \frac{\partial}{\partial z_a} z_n \overline{z}_n  f \Big) \\
&= \sum_{a=1}^n z_a \Big( \frac{\alpha_a}{z_a} z_1 \overline{z}_1 f + \cdots + \frac{\alpha_a + 1}{z_a} z_a \overline{z}_a f + \cdots + \frac{\alpha_a}{z_a} z_n \overline{z}_n f \Big) \\
&= \sum_{a=1}^n \Big( (\alpha_a) z_1 \overline{z}_1 f + \cdots + (\alpha_a + 1) z_a \overline{z}_a f + \cdots + (\alpha_a) z_n \overline{z}_n f \Big) \\
&= \sum_{i=1}^n ( \alpha_1 + \cdots + (\alpha_i + 1) + \cdots + \alpha_n ) z_i \overline{z}_i f \\
&= \sum_{i=1}^n (p+1) z_i \overline{z}_i f  
= (p+1) \sum_{i=1}^n  z_i \overline{z}_i f 
= (p+1)f \\
\end{align*}

We are now ready to compute the fourth term of the $\dbba \dbb f$ expansion: 

\begin{align*}
- 2 \sum_{i=1}^n \sum_{a=1}^n \frac{\partial}{\partial z_a} z_a \overline{z}_i z_i q f 
&= - 2 q \Big( \sum_{a=1}^n \frac{\partial}{\partial z_a} z_a \sum_{i=1}^n z_i \overline{z}_i f  \Big) \\
&= - 2 q \Big( \sum_{a=1}^n \Big( z_a \frac{\partial}{\partial z_a} + I \Big) \sum_{i=1}^n z_i \overline{z}_i f \Big) \\
&= - 2 q \Big( \sum_{a=1}^n z_a \frac{\partial}{\partial z_a} \sum_{i=1}^n z_i \overline{z}_i f +  \sum_{a=1}^n \sum_{i=1}^n z_i \overline{z}_i f \Big) \\
&= - 2 q(p + 1 + n)f
\end{align*}

Returning to our original computation of $\dbba \dbb f$, we now have:

\begin{align*}
\dbba \dbb f &= -2 \sum_{i=1}^n \frac{\partial^2 f}{\partial z_i \partial \overline{z}_i} + 
2 \sum_{i=1}^n \frac{\partial}{\partial z_i} z_i q  + 
2 \sum_{i=1}^n \sum_{a=1}^n \frac{\partial}{\partial z_a} z_a \overline{z}_i \frac{\partial f}{\partial \overline{z}_i} 
- 2 \sum_{i=1}^n \sum_{a=1}^n \frac{\partial}{\partial z_a} z_a \overline{z}_i  z_i q   \\
&= 0 + 2q (p + n )f  + 2q (p + n )f  - 2 q(p + 1 + n)f \\
&= 2q(p + n - 1)f.
\end{align*}
\end{proof}


\subsection{Asymptotics of Counting Function}

In this part we look at the counting function $N(m)$. 
\begin{definition}
For $m\in\mathbb{Z}$, let $N(m)$ be the number of eigenvalues of $\boxb$ on $L^2(\Sn)$ that are less or equal than $m$ with counting multiplicity.
\end{definition}
Similar functions and relations between their asymptotics and geometry of the underlying manifold were studied in \cite{Met, Fu2005,Fu2008}. In particular in some cases the growth rate of $N(m)$ carries information about the type of the manifold \cite{Fu2005,Fu2008}. Furthermore, in the case of the Laplace-Beltrami operator, Weyl's law states that the limit of the ratio $\frac{N(m)}{m^n}$ gives the surface area of $\Sn$. Before we state our result, we recall Lemma \ref{dimension}.

\begingroup
\def\thelemma{\ref{dimension}}
\begin{lemma}\label{dimHPQ}For $p,q \geq 1$,
\[ \dim({\mathcal{H}_{p,q}}(\Sn))=\frac{(n-1)(n+p+q-1)}{pq} \binom{n+p-2}{p-1} \binom{n+q-2}{q-1}. \]
\end{lemma}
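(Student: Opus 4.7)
The plan is to compute $\dim\hpq(\Cn)$ via the Fischer decomposition on polynomials and then transfer the count to $\Sn$ via a homogeneity argument. As a warm-up, a stars-and-bars count of monomials $z_1^{\alpha_1}\cdots z_n^{\alpha_n}\overline{z}_1^{\beta_1}\cdots\overline{z}_n^{\beta_n}$ with $|\alpha|=p$ and $|\beta|=q$ gives the dimension of the ambient space of bihomogeneous polynomials, $\dim\ppq(\Cn)=\binom{n+p-1}{p}\binom{n+q-1}{q}$.

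The key structural ingredient is the Fischer decomposition
\[
\ppq(\Cn)=\hpq(\Cn)\oplus |z|^2\,\PP_{p-1,q-1}(\Cn),
\]
where $|z|^2=\sum_{k=1}^n z_k\overline{z}_k$. To establish this, I would equip the polynomial algebra with the Fischer inner product (under which the monomials $z^\alpha\overline{z}^\beta$ form an orthogonal basis). A direct computation in coordinates shows that $\triangle$ and multiplication by $|z|^2$ are mutual adjoints with respect to this pairing. Since multiplication by $|z|^2:\PP_{p-1,q-1}(\Cn)\to\ppq(\Cn)$ is obviously injective (the polynomial ring has no zero divisors), its adjoint $\triangle:\ppq(\Cn)\to\PP_{p-1,q-1}(\Cn)$ is surjective. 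Rank--nullity, together with $\ker(\triangle)\cap\ppq(\Cn)=\hpq(\Cn)$, then yields
\[
\dim\hpq(\Cn)=\binom{n+p-1}{p}\binom{n+q-1}{q}-\binom{n+p-2}{p-1}\binom{n+q-2}{q-1},
\]
which is the first form stated in the lemma.

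Next, I would show that restriction to $\Sn$ gives an isomorphism $\hpq(\Cn)\to\hpq(\Sn)$: surjectivity holds by the definition of $\hpq(\Sn)$ as restrictions, and injectivity follows because any $h\in\hpq(\Cn)$ vanishing on $\Sn$ also vanishes on all of $\Cn\setminus\{0\}$ via the homogeneity identity $h(z)=|z|^{p+q}h(z/|z|)$. Finally, the factored second form of the formula is obtained by extracting $\binom{n+p-2}{p-1}\binom{n+q-2}{q-1}$ as a common factor, using $\binom{n+p-1}{p}=\tfrac{n+p-1}{p}\binom{n+p-2}{p-1}$ and its analog for $q$, and simplifying the identity $(n+p-1)(n+q-1)-pq=(n-1)(n+p+q-1)$. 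The main obstacle is the surjectivity of $\triangle$ underlying the Fischer decomposition; the remaining ingredients are routine combinatorial counting, a short homogeneity argument, and elementary binomial algebra.
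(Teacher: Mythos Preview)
Your argument is correct. The paper does not actually supply its own proof of this lemma: it is quoted verbatim as \cite[Corollary 3.10]{Klima} with only the remark that it follows from ``a standard counting argument,'' so there is no in-paper proof to compare against. What you have written---the Fischer decomposition $\ppq(\Cn)=\hpq(\Cn)\oplus |z|^2\,\PP_{p-1,q-1}(\Cn)$ established via the adjointness of $\triangle$ and multiplication by $|z|^2$, the stars-and-bars count of $\dim\ppq(\Cn)$, the homogeneity argument for injectivity of restriction to $\Sn$, and the binomial simplification $(n+p-1)(n+q-1)-pq=(n-1)(n+p+q-1)$---is exactly the standard derivation one finds in references such as Klima's thesis or Axler--Bourdon--Ramey (adapted to the bigraded setting). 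One small point worth making explicit in your write-up: the identity $h(z)=|z|^{p+q}h(z/|z|)$ you invoke for injectivity relies on the fact that a polynomial of bidegree $(p,q)$ is homogeneous of total degree $p+q$ under \emph{real} dilations $z\mapsto\lambda z$, $\lambda>0$, since then $\overline{\lambda z}=\lambda\overline{z}$.
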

\addtocounter{lemma}{-1}
\endgroup

Note that ignoring multiplicity would induce a function with linear growth. Indeed for any even $\widehat{m}$ with $m \geq \widehat{m} > 2(n-1)$, we can solve $\widehat{m} = 2q(p+n-1)$ after fixing $q=1$. Additionally, by convention, we set $N(m) = 0$ when $m < 0$.

We note that when $n=1$, the eigenvalue of $\dbba \dbb = 0$. Indeed, when $n=1$ and when $p$ and $q$ are both nonzero, Lemma $\ref{dimHPQ}$ gives us that the dimension of $\hpq$ is 0. This is because the only harmonic homogeneous polynomials on $\CC$ are of the form $z^p$ or $\overline{z}^q$, which belong to $\HH_{p,0}$ or $\HH_{0,q}$, respectively. Thus, $\hpq$ is nontrivial only when either $p$ or $q$ is zero. However, on such spaces, the eigenvalue of $\dbba \dbb$ on $\hpq$ is 0.

\begin{lemma}{\label{countLB}}
There exists a real constant $c > 0$ so that $ c m^n \leq N(m)$, i.e. $N(m) \in \Omega(m)$. 

\end{lemma}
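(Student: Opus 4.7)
The plan is to extract the lower bound from a single slice of the eigenvalue index, namely $q=1$, since already this slice contributes enough eigenvalues $\le m$ to produce polynomial growth of order $n$. Throughout I assume $n \geq 2$; when $n=1$ the operator $\boxb$ is identically zero on $L^2(\Sn)$ (as observed immediately before the statement), so the claim is vacuous.

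The first step is to specialize Lemma \ref{dimHPQ} at $q=1$, which gives
\begin{equation*}
\dim \mathcal{H}_{p,1}(\Sn) \;=\; \frac{(n-1)(n+p)}{p}\binom{n+p-2}{n-1}.
\end{equation*}
Using the elementary bound $\binom{n+p-2}{n-1}=\frac{p(p+1)\cdots(p+n-2)}{(n-1)!}\ge \frac{p^{n-1}}{(n-1)!}$, one obtains the clean lower bound $\dim \mathcal{H}_{p,1}(\Sn) \ge \frac{p^{n-1}}{(n-2)!}$ for every $p \ge 1$.

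The second step uses Theorem \ref{Fol}: the eigenvalue attached to $\mathcal{H}_{p,1}(\Sn)$ equals $2(p+n-1)$, so this entire subspace is counted by $N(m)$ whenever $p \le m/2 - (n-1)$. Once $m$ is sufficiently large (say $m \ge 4n$), every integer in the range $1 \le p \le \lfloor m/4 \rfloor$ is admissible, and hence
\begin{equation*}
N(m) \;\ge\; \sum_{p=1}^{\lfloor m/4\rfloor} \dim \mathcal{H}_{p,1}(\Sn) \;\ge\; \frac{1}{(n-2)!}\sum_{p=1}^{\lfloor m/4\rfloor} p^{n-1}.
\end{equation*}

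Finally, the standard lower Riemann sum $\sum_{p=1}^{P} p^{n-1} \ge P^n/n$ delivers $N(m) \ge \frac{(m/4)^n}{n\,(n-2)!}$, which is precisely the desired inequality with $c = 1/(n\,(n-2)!\,4^n)$, possibly shrunk to absorb the finitely many small values of $m$. There is no real obstacle here; the only bit of cleverness is choosing the $q=1$ slice, which is already wide enough in $p$ to yield $\Theta(m^n)$ eigenvalues without needing to sum over $q > 1$.
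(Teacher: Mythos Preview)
Your proof is correct and follows essentially the same approach as the paper: both isolate the $q=1$ slice, use Lemma~\ref{dimHPQ} to bound $\dim\mathcal{H}_{p,1}(\Sn)$ below by a constant times $p^{n-1}$, and then sum to get $\Omega(m^n)$. The only cosmetic difference is that the paper telescopes via the increments $N(m)-N(m-2)$ and bounds each increment by $\dim\mathcal{H}_{\widehat p,1}$, whereas you sum directly over all admissible $p$; your version is arguably a bit more streamlined.
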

\begin{proof}
Fix even $m$, then $N(m) - N(m-2)$ is the multiplicity of the eigenvalue $m$, since all the eigenvalues are even by Theorem \ref{Fol}. This requires computing the sum of the dimensions of all $\hpq(\Sn)$ such that the pair $(p,q)$ satisfies the equation $E(p,q) = m$, where $E(p,q) = 2q(p+n-1)$. 
Now assuming $m > 2(n-1)$, there exists a positive integer solution $p = \widehat{p}$ to $E(p,q) =m$ when $q = 1$. Define the solution set $A = \{(p,q) \; | \; E(p,q) = m \}$. Then we have 

\[ N(m) - N(m-2) = \sum_{(p,q) \in A} \dim \hpq \geq \dim \HH_{\widehat{p}, 1}. \]

Note that $\dim \HH_{\widehat{p}, 1} = \Omega (m^{n-1})$, which follows from Lemma \ref{dimension}. Namely, since asymptotically $\widehat{p} = m/2$, we have

\begin{align*}
\dim \HH_{\widehat{p}, 1(\Sn)} 
&= \frac{(n-1)(n+ \widehat{p})}{\widehat{p}} \binom{n+\widehat{p}-2}{n-1} \binom{n-1}{n-1} \\
&\geq  \binom{n+\widehat{p}-2}{n-1} \\
&\geq \frac{1}{(n-1)!} \widehat{p}^{n-1} \\
&= \Omega\Big( \frac{m}{2} \Big)^{n-1} \\
&= \Omega(m^{n-1}).    
\end{align*}
 
Putting it all together, we have that 

\[ 2N(m) \geq N(m) + N(m-1) = \sum_{j=0}^m (N(j) - N(j-2)) \geq \sum_{j= 0}^m \Omega(j^{n-1}) \geq \Omega(m^n).  \]
\end{proof}

\begin{lemma}{\label{countUB}}
There exists a real constant $c > 0$ so that $N(m) \leq c m^n$, i.e. $N(m) = O(m^n)$.
\end{lemma}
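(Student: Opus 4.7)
The plan is to write $N(m)$ as a double sum using Theorem \ref{Fol} and the dimension formula in Lemma \ref{dimension}, and then estimate it directly. By Theorem \ref{Fol},
\[ N(m) = \sum_{\substack{p\ge 0,\ q\ge 1 \\ 2q(p+n-1) \leq m}} \dim \mathcal{H}_{p,q}(\Sn), \]
where the $q=0$ slice (which contributes the infinite-dimensional holomorphic kernel of $\boxb$) is excluded, consistent with the convention implicit in the proof of the lower bound. The constraints force $q \leq m/(2(n-1))$ and, for each such $q$, $p \leq m/(2q)-n+1$.

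The key input will be the \emph{second} expression in Lemma \ref{dimension}: since $\binom{n+p-2}{p-1}/p$ is a polynomial in $p$ of degree $n-2$, one reads off the uniform upper bound
\[ \dim \mathcal{H}_{p,q}(\Sn) \leq C_n\,(p+q+n-1)\,p^{n-2} q^{n-2} \]
for $p,q \geq 1$ and some constant $C_n = C_n(n)$. The boundary slice $p=0$ is handled separately by $\dim \mathcal{H}_{0,q}(\Sn) = O(q^{n-1})$, which summed over $q \leq m/(2(n-1))$ contributes $O(m^n)$.

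For the main sum, I would split $(p+q+n-1)p^{n-2} = p^{n-1} + (q+n-1)p^{n-2}$ and, for fixed $q$, apply the elementary bound $\sum_{p=1}^{P} p^k = O(P^{k+1})$ with $P = \lfloor m/(2q) \rfloor$ to obtain
\[ \sum_{p=1}^{\lfloor m/(2q)\rfloor} (p+q+n-1)\,p^{n-2} \;=\; O\!\left(\frac{m^n}{q^n}\right) + O\!\left(\frac{m^{n-1}}{q^{n-2}}\right). \]
Multiplying by the $q^{n-2}$ factor from the $q$-side of the dimension bound and summing over $1 \leq q \leq m/(2(n-1))$ yields
\[ \sum_{q\geq 1} O\!\left(\frac{m^n}{q^2}\right) + \sum_{q=1}^{\lfloor m/(2(n-1))\rfloor} O(m^{n-1}) \;=\; O(m^n) + O(m^n) \;=\; O(m^n), \]
where the first sum converges because $\sum_q 1/q^2 < \infty$.

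The one delicate point — and the reason for insisting on the second form of Lemma \ref{dimension} rather than the raw product of binomials — is that the naive bound $\dim \mathcal{H}_{p,q}(\Sn) = O(p^{n-1}q^{n-1})$ would force the outer $q$-sum to behave like $\sum 1/q$, producing a spurious $m^n \log m$. Extracting the cancellation $(p+q+n-1)/(pq)$ from the dimension formula is precisely what converts the harmonic sum into the convergent $\sum 1/q^2$. Beyond that, the argument is routine arithmetic bookkeeping, and when combined with Lemma \ref{countLB} immediately yields $N(m) = \Theta(m^n)$, proving Theorem \ref{countB}.
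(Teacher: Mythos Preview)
Your proof is correct and takes a genuinely different route from the paper. Both arguments start from the same refined dimension bound $\dim\mathcal{H}_{p,q}\lesssim (p+q)\,(pq)^{n-2}$ extracted from the second form of Lemma~\ref{dimension}; the divergence is in how the sum is organized. The paper slices by level sets: it computes $N(m)-N(m-2)$ as a sum over pairs $(p,q)$ with $2pq\approx m$, recognizes that $\sum_{pq=m}(p+q)=\sigma(m)$, and then invokes the analytic-number-theory estimate $\sum_{x\le m}\sigma(x)=O(m^2)$ from \cite{apostol}. You instead sum directly over the hyperbolic region $\{2q(p+n-1)\le m\}$, performing the $p$-sum first for each fixed $q$ and then observing that the resulting $q$-sum splits into a convergent $\sum 1/q^2$ piece and a piece with $O(m)$ terms of size $O(m^{n-1})$. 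Your route is more elementary---it needs nothing beyond $\sum_{p\le P}p^k=O(P^{k+1})$ and $\sum 1/q^2<\infty$---while the paper's route is shorter to write but imports a nontrivial external fact about divisor sums. Your observation that the naive bound $O(p^{n-1}q^{n-1})$ would produce a spurious $\log m$ is exactly the cancellation the paper also exploits, just packaged differently.
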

\begin{proof}
Again, fix even $m$ and inspect $N(m) - N(m-2)$. Note that asymptotically, we can let our eigenvalue equation be $E(p,q) = 2qp$. Thus, asymptotically we have that 
\[ N(m) - N(m-2) = \sum_{(p,q) \in A} \dim \hpq \lesssim \sum_{(p,q) \in A} (p+q) (pq)^{n-2} =  \sigma(m)O(m^{n-2}), \]
where $\sigma(m)$ is the sum of all divisors of $m$. Thus, we have that 
\[ N(m) \lesssim \sum_{x \leq m} 2x^{n-2} \sigma(x) \lesssim 2m^{n-2} \sum_{x\leq m} \sigma(x) = O(m^n). \]
The last equality follows since $\sum_{x \leq m} \sigma(x) = O(m^2)$. A proof of this fact can be found in Chapter 3.6 of \cite{apostol}.
\end{proof}

By combining the last two lemmas we obtain the following statement.
\begingroup
\def\thetheorem{\ref{countB}}
\begin{theorem}
There exists a real $c > 0$ so that $ \frac{1}{c} m^n \leq N(m) \leq c m^n$, i.e. $N(m)=\Theta(m^n).$
\end{theorem}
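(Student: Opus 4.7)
The plan is to split the statement into a matching lower bound $N(m) = \Omega(m^n)$ and upper bound $N(m) = O(m^n)$, handled separately; the combination is then immediate. In both directions the key idea is to pass from the cumulative count $N(m)$ to the successive differences $N(m) - N(m-2)$, which by Theorem \ref{Fol} equal the total dimension of the eigenspace at level $m$, i.e.\ the sum of $\dim \hpq(\Sn)$ over the solution set $A(m) = \{(p,q) \in \NN_{\geq 1}^2 : 2q(p+n-1) = m\}$. Throughout we use Lemma \ref{dimension} to control each $\dim \hpq(\Sn)$ by a polynomial in $p$ and $q$, with leading behavior $(p+q)(pq)^{n-2}/((n-1)!)^2$.

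For the lower bound I would discard almost all of $A(m)$ and keep only the single solution with $q=1$, namely $\hat{p} = \tfrac{m}{2} - (n-1)$ (for $m$ even and sufficiently large). Then $\dim \HH_{\hat{p},1}(\Sn)$ already satisfies $\dim \HH_{\hat{p},1}(\Sn) \geq \binom{n+\hat{p}-2}{n-1} = \Omega(m^{n-1})$. Summing this bound over even $m' \leq m$ gives $N(m) \geq \sum_{m' \leq m} \Omega((m')^{n-1}) = \Omega(m^n)$, which establishes the lower half.

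For the upper bound I would estimate each eigenspace dimension by $\dim \hpq(\Sn) \lesssim (p+q)(pq)^{n-2}$ and observe that $A(m)$, asymptotically parametrized by divisor pairs of $m/2$, contributes
\[
N(m) - N(m-2) \;\lesssim\; \sum_{(p,q)\in A(m)} (p+q)(pq)^{n-2} \;\lesssim\; \sigma(m)\, O(m^{n-2}).
\]
Summing over $m' \leq m$ and invoking the classical asymptotic $\sum_{x \leq m} \sigma(x) = O(m^2)$ (e.g.\ from Apostol, as already cited) gives $N(m) = O(m^{n-2}) \cdot O(m^2) = O(m^n)$.

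The main obstacle I anticipate is the upper bound, since there one cannot simply keep a single representative pair as in the lower bound: the entire divisor structure of $m$ contributes, and crude estimates like $\sigma(m) = O(m^{1+\varepsilon})$ would cost an $\varepsilon$ loss in the final exponent. The trick is to resist bounding $\sigma(m)$ pointwise and instead use the averaged bound $\sum_{x \leq m}\sigma(x) = O(m^2)$ after switching the order of summation, which is exactly sharp for our purposes. Once both bounds are in hand, the theorem follows by choosing $c$ to be the maximum of the two implicit constants.
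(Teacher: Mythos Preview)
Your proposal is correct and follows essentially the same approach as the paper: the paper also splits into lower and upper bounds, passes to the successive differences $N(m)-N(m-2)$, keeps only the $q=1$ solution for the lower bound to get $\Omega(m^{n-1})$ per level, and for the upper bound uses the estimate $\dim\hpq \lesssim (p+q)(pq)^{n-2}$ together with the averaged divisor bound $\sum_{x\le m}\sigma(x)=O(m^2)$ from Apostol rather than a pointwise bound on $\sigma$. Your anticipated obstacle and its resolution are exactly the ones the paper identifies and uses.
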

\addtocounter{theorem}{-1}
\endgroup

We note that the constants in Lemma \ref{countLB}, Lemma \ref{countUB}, and Theorem \ref{countB} do depend on the dimension $n$. This dependence also agrees with the explicit constants calculated by Weyl for the Laplace-Beltrami operator.

\section{Spectrum of Other Second Order Differential Operators on $L^2(\Sn)$}
Another interesting class of second order differential operators are sum of squares operators $\heart_b$, introduced in the fourth chapter of \cite{Klima}. These operators capture \textit{half} of the action of $\boxb$ on $\3$; in higher dimensions they lead to the study of various possible perturbation of $\boxb$.

We define the sum of squares operator $\heart_b$ on $L^2 ( \Sn)$ as
\[ \heart_b = -(M_{12}\overline{M}_{12} + M_{13} \overline{M}_{13} + \cdots + M_{1n} \overline{M}_{1n}), \]
where $M_{1k} = \overline{z}_1 \frac{\partial}{\partial z_k} - \overline{z}_k \frac{\partial}{\partial z_1}$ and $\overline{M}_{1k} = z_1 \frac{\partial}{\partial \overline{z}_k} - z_k \frac{\partial}{\partial \overline{z}_1}$. Note that one can easily consider $M_{ik}$ for $i\not=1$, for simplicity we focus on the case $i=1$.

For any $f \in \hpq(\Sn)$, the specific degrees of the $z_k, \overline{z}_k$ may vary. For example, both $z_1^2z_2\overline{z}_1^3\overline{z}_2^2$ and $z_1z_2^2\overline{z}_1^2\overline{z}_2^3$ are in $\mathcal{H}_{3,5}(\3)$. In previous arguments, such specificity was unnecessary, but we find that for $\heart_b$, the eigenvalues can directly depend on the exact degrees of the $z_k, \overline{z}_k$. To that end, for non-negative integer tuples $p = (p_1, \ldots, p_n)$ and $q = (q_1, \ldots, q_n)$, we use $\hpqTUPLE(\CC^n)$ to denote the space of all harmonic, homogeneous polynomials where $p_k$ is the degree of $z_k$, and $q_k$ is the degree of $\overline{z}_k$. Then we use $\hpqTUPLE(\Sn)$ to denote the restriction of this space on $\Sn$. For example, now $z_1^2z_2\overline{z}_1^3\overline{z}_2^2\in \mathcal{H}^{\ast}_{(2,1),(3,2)}(\3)$ but $z_1z_2^2\overline{z}_1^2\overline{z}_2^3\in \mathcal{H}^{\ast}_{(1,2),(2,3)}(\3)$.
Note that $\hpqTUPLE(\Sn)$ is a subspace of $\mathcal{H}_{\overline{p},\overline{q}}(\Sn)$, where $\overline{p} = \sum_{i=1}^n p_i$ and $\overline{q} = \sum_{i=1}^n q_i$. Now for certain $\hpqTUPLE(\Sn)$, we have the following result.


\begin{lemma}
Consider two non-negative integer tuples $p = (p_1, \ldots, p_n)$ and $q = (q_1, \ldots, q_n)$. Suppose that for each $1\leq k\leq n$, at least one of $p_k$ or $q_k$ is 0. Then the eigenvalue of $\heart_b$ on $\hpqTUPLE(\Sn)$ is 
\[   p_1 \sum_{k=2}^n q_k + q_1 \sum_{k=2}^n p_k   +   (n-1) q_1  +  \sum_{k=2}^n q_k   . \]
\end{lemma}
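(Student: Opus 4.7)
The plan is to reduce to a direct computation on a single monomial. Under the hypothesis that at least one of $p_k,q_k$ is zero for every $k$, one has $p_i q_i = 0$ for each $i$, so
\[
\triangle\!\left(z_1^{p_1}\cdots z_n^{p_n}\overline{z}_1^{q_1}\cdots \overline{z}_n^{q_n}\right) = 4\sum_{i=1}^n \frac{p_i q_i}{z_i\overline{z}_i}\,z_1^{p_1}\cdots z_n^{p_n}\overline{z}_1^{q_1}\cdots \overline{z}_n^{q_n} = 0.
\]
Hence the single monomial $f = z_1^{p_1}\cdots z_n^{p_n}\overline{z}_1^{q_1}\cdots \overline{z}_n^{q_n}$ is already harmonic, and $\hpqTUPLE(\Sn)$ is the one-dimensional line spanned by $f\restrict_{\Sn}$. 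It therefore suffices to apply $\heart_b$ to $f$ and read off the scalar.

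For fixed $k \geq 2$ I would expand $M_{1k}\overline{M}_{1k}$ as a second order differential operator by commuting the coefficient functions past the derivatives, using $[\partial_{z_k},z_k] = [\partial_{z_1},z_1] = 1$ while every other commutator of a $z_j$ with a $\partial_{z_i}$ vanishes. A short calculation yields the six-term expression
\[
M_{1k}\overline{M}_{1k} = z_1\overline{z}_1\,\partial_{z_k}\partial_{\overline{z}_k} + z_k\overline{z}_k\,\partial_{z_1}\partial_{\overline{z}_1} - z_k\overline{z}_1\,\partial_{z_k}\partial_{\overline{z}_1} - z_1\overline{z}_k\,\partial_{z_1}\partial_{\overline{z}_k} - \overline{z}_1\,\partial_{\overline{z}_1} - \overline{z}_k\,\partial_{\overline{z}_k}.
\]
Applying this to $f$ with the Euler-type identities $z_j\partial_{z_j}f = p_j f$, $\overline{z}_j\partial_{\overline{z}_j}f = q_j f$, and $\partial_{z_j}\partial_{\overline{z}_j}f = p_j q_j f/(z_j\overline{z}_j)$, the hypothesis kills the first two mixed-second-derivative terms (since $p_kq_k=0$ and $p_1q_1=0$), while the remaining four contribute
\[
M_{1k}\overline{M}_{1k}f = -(p_kq_1 + p_1q_k + q_1 + q_k)\,f.
\]

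Finally, summing over $k=2,\ldots,n$ and incorporating the minus sign from the definition of $\heart_b$ gives
\[
\heart_b f = \left[(n-1)q_1 + q_1\sum_{k=2}^n p_k + p_1\sum_{k=2}^n q_k + \sum_{k=2}^n q_k\right] f,
\]
which is exactly the claimed eigenvalue. The only part requiring genuine care is the commutator bookkeeping in the operator expansion; once that identity is in hand, evaluation on the monomial is immediate, and the hypothesis enters cleanly to annihilate precisely the two terms that would otherwise produce a non-monomial result (and would therefore prevent $f$ from being an eigenvector at all).
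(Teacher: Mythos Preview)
Your proof is correct and follows essentially the same approach as the paper: compute $-M_{1k}\overline{M}_{1k}f$ term by term on the monomial, use the hypothesis $p_kq_k=p_1q_1=0$ to kill the two mixed second-derivative terms, and sum over $k$. The only cosmetic differences are that you first expand $M_{1k}\overline{M}_{1k}$ as a six-term operator via commutators before applying it (the paper applies the two first-order factors in succession), and you make explicit that the monomial is harmonic and spans $\hpqTUPLE(\Sn)$, which the paper leaves implicit.
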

\begin{proof}
Take $f \in \hpqTUPLE(\Sn)$, where $p_k =0$ or $q_k = 0$ for each $k$. By linearity, we can inspect the action of each $-M_{1k}\overline{M}_{1k}$ piece of $\heart_b$ on $f$. We have
\begin{align*}
-M_{1k}\overline{M}_{1k} f &= - 
\Big(\overline{z}_1 \frac{\partial}{\partial z_k} - \overline{z}_k \frac{\partial}{\partial z_1} \Big) 
\Big(  z_1 \frac{\partial}{\partial \overline{z}_k} - z_k \frac{\partial}{\partial \overline{z}_1} \Big) 
f \\
&= - \overline{z}_1 \frac{\partial}{\partial z_k} z_1 \frac{\partial}{\partial \overline{z}_k} f
+ \overline{z}_1 \frac{\partial}{\partial z_k} z_k \frac{\partial}{\partial \overline{z}_1} f
+ \overline{z}_k \frac{\partial}{\partial z_1}  z_1 \frac{\partial}{\partial \overline{z}_k} f
- \overline{z}_k \frac{\partial}{\partial z_1} z_k \frac{\partial}{\partial \overline{z}_1} f \\
&= - z_1 \overline{z}_1 \frac{\partial}{\partial z_k}  \frac{\partial}{\partial \overline{z}_k} f 
+ \overline{z}_1 \frac{\partial}{\partial \overline{z}_1}  \frac{\partial}{\partial z_k} z_k f
+ \overline{z}_k  \frac{\partial}{\partial \overline{z}_k} \frac{\partial}{\partial z_1}  z_1 f
- z_k \overline{z}_k \frac{\partial}{\partial z_1} \frac{\partial}{\partial \overline{z}_1} f \\
&= 0 + \overline{z}_1 \frac{\partial}{\partial \overline{z}_1}  \frac{\partial}{\partial z_k} z_k f
+ \overline{z}_k  \frac{\partial}{\partial \overline{z}_k} \frac{\partial}{\partial z_1}  z_1 f
- 0  \\
&= \overline{z}_1 \frac{\partial}{\partial \overline{z}_1} \Big( z_k \frac{\partial}{\partial z_k}  + I \Big) f
+ \overline{z}_k  \frac{\partial}{\partial \overline{z}_k} \Big(  z_1 \frac{\partial}{\partial z_1} + I \Big)f \\
&= \overline{z}_1 \frac{\partial}{\partial \overline{z}_1} z_k \frac{\partial}{\partial z_k}f  
+ \overline{z}_1 \frac{\partial}{\partial \overline{z}_1} f
+ \overline{z}_k  \frac{\partial}{\partial \overline{z}_k}  z_1 \frac{\partial}{\partial z_1} f 
+ \overline{z}_k  \frac{\partial}{\partial \overline{z}_k} f \\
&= q_1 p_k f  + q_1 f + q_k  p_1 f + q_k  f \\
\end{align*}
Thus, we have 
\begin{align*}
\heart_b(f) &= \sum_{k=2}^n -M_{1k}\overline{M}_{1k} f \\
&= \sum_{k=2}^n  (q_1 p_k   + q_1  + q_k  p_1  + q_k ) f \\
&= \Big( \sum_{k=2}^n  q_1 p_k   +   \sum_{k=2}^n q_1  +   \sum_{k=2}^n q_k  p_1  +  \sum_{k=2}^n q_k \Big) f \\
&= \Big(  q_1 \sum_{k=2}^n p_k   +   (n-1) q_1  +   p_1 \sum_{k=2}^n q_k   +  \sum_{k=2}^n q_k \Big) f. \\
\end{align*}
\end{proof}

The above lemma tells us that $z_1^2 z_2 \overline{z}_1^3 \overline{z}_2^2 \in \mathcal{H}^*_{(2,1),(3,2)} (\mathbb{S}^3) $ has eigenvalue $2(2) + 3(1) + (2-1)(3) + (2) = 12$. On the other hand, $z_1 z_2^2 \overline{z}_1^2 \overline{z}_2^3 \in \mathcal{H}^*_{(1,2),(3,2)} (\mathbb{S}^3)$ has eigenvalue $1(2) + 3(2) + (2-1)(3) + (2)=13$. More generally, the lemma tells us that $\HH_{p,0}(\Sn)$ is in the null space of $\heart_b$ for all $p \in \N$. Furthermore, the eigenvalue of $\heart_b$ on $\mathcal{H}^*_{0,q}(\Sn)$ is $(n-1) q_1  + q_2 + \cdots + q_n$. On other $\hpq(\Sn)$ spaces, computational results suggest that we have integer eigenvalues, and matrix representations follow a pattern as well. We will leave the investigation of other eigenvalues to a future study. We invite the interested reader to see other computational results by downloading our code.\footnote{The code can be downloaded at \texttt{https://goo.gl/kBsUzA}.}


\section{Eigenvalues of $\boxbt$ on the Rossi sphere}

Previously in \cite{REU17} authors studied the spectrum of the perturbed Kohn Laplacian $\boxbt$ on the Rossi Sphere. They obtained an upper bound for lowest eigenvalue for $\boxbt$ on each $\HH_k(\3)$. In our project, we look at the asymptotics of the spectrum of the (perturbed) Kohn Laplacian on the Rossi Sphere, in particular the asymptotics of $\lambda_k^{max}$, the maximum eigenvalue of $\boxbt$ on $\HH_k(\3)$.

In \cite{REU17} it is proved that tridiagonal representation results for spaces of homogeneous polynomials of odd degree, $\HH_{2k-1}(\3)$. However, their proof actually works for arbitrary degrees, $\HH_k(\3)$. We restate the steps to construct the tridiagonal matrix representations here, and one can refer to \cite{REU17} for details.  We first recall the definition of differential operators $\LL{}, \lbar{},$ and $\boxbt$ on $L^2(\3)$. 
\begin{definition} We define $\LL{}$ and $\lbar{}$ as
\begin{align*}
\LL{} &= \overline{z}_1 \frac{\partial}{\partial z_2} - \overline{z}_2 \frac{\partial}{\partial z_1}, \\
\lbar{} &= z_1 \frac{\partial}{\partial \overline{z}_2} - z_2 \frac{\partial}{\partial \overline{z}_1}, \\
\boxbt&=-\l_t \frac{1 + |t|^2}{(1 - |t|^2)^2} \lbar_t.
\end{align*}
\end{definition} 
\noindent The motivation for these operators arises from the CR-manifold $(\3, \lt)$, which is not CR-embeddable \cite{Rossi65}. Note that $\lt = \LL{} + \overline{t} \lbar{}$ and $|t| < 1$.


\begin{theorem}{\label{orthBasis}} \cite{REU17} 
Let $\{ f_0, \ldots, f_k \}$ be an orthogonal basis for $\HH_{0, k} (\3)$. Then $\{ \lbar{}^\sigma f_0 , \ldots, \lbar{}^\sigma f_k\}$ is an orthogonal basis for $\HH_{\sigma, k - \sigma} (\3)$. 
\end{theorem}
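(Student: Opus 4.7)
The plan is to reduce the claim to an $\mathfrak{sl}_2$-representation-theoretic argument on $L^2(\3)$. First, a direct commutator computation yields
\[
[\LL{}, \lbar{}] \;=\; z_1 \frac{\partial}{\partial z_1} + z_2 \frac{\partial}{\partial z_2} - \overline{z}_1 \frac{\partial}{\partial \overline{z}_1} - \overline{z}_2 \frac{\partial}{\partial \overline{z}_2} \;=:\; H,
\]
where $H$ acts on $\HH_{p,q}(\3)$ as the scalar $p-q$, so $\{\LL{}, \lbar{}, H\}$ form an $\mathfrak{sl}_2$-triple satisfying $[H,\lbar{}] = 2\lbar{}$ and $[H,\LL{}] = -2\LL{}$. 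A similar short check shows $[\Delta, \lbar{}] = 0$, so $\lbar{}$ preserves harmonicity; together with the bidegree shift $(p,q) \mapsto (p+1, q-1)$, this gives $\lbar{}^\sigma(\HH_{0,k}(\3)) \subseteq \HH_{\sigma, k-\sigma}(\3)$.

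Second, every $f_i \in \HH_{0,k}(\3)$ is antiholomorphic, so $\LL{} f_i = 0$; in $\mathfrak{sl}_2$ language each $f_i$ is a lowest-weight vector of weight $-k$. Iterating the identity $\LL{}\lbar{}^n = \lbar{}^n \LL{} + n\lbar{}^{n-1}(H + n - 1)$ from Step~1 and applying it to $f_i$ with $\LL{} f_i = 0$ and $H f_i = -k f_i$, a straightforward induction on $\sigma$ produces
\[
\LL{}^\sigma \lbar{}^\sigma f_i \;=\; (-1)^\sigma\, \sigma!\, \frac{k!}{(k-\sigma)!}\, f_i.
\]
Since this scalar is nonzero for $0 \leq \sigma \leq k$, the map $\lbar{}^\sigma$ is injective on $\HH_{0,k}(\3)$. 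A direct count (or Lemma \ref{dimension}) gives $\dim \HH_{0,k}(\3) = k+1 = \dim \HH_{\sigma, k-\sigma}(\3)$, so $\{\lbar{}^\sigma f_0, \ldots, \lbar{}^\sigma f_k\}$ is automatically a linear basis of $\HH_{\sigma, k-\sigma}(\3)$.

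Third, to upgrade this to an orthogonal basis I would establish the adjoint identity $\lbar{}^* = -\LL{}$ on $L^2(\3)$. Because $\LL{}(|z|^2) = 0$, $\LL{}$ is tangential to $\3$; decomposing $\LL{} = X + iY$ into real tangential vector fields (each formally skew-adjoint against the induced surface measure) yields $\LL{}^* = -X + iY = -\overline{\LL{}} = -\lbar{}$, and dually $\lbar{}^* = -\LL{}$. Combining this with Step~2,
\[
\langle \lbar{}^\sigma f_i, \lbar{}^\sigma f_j \rangle \;=\; \langle f_i, (-\LL{})^\sigma \lbar{}^\sigma f_j\rangle \;=\; \sigma!\, \frac{k!}{(k-\sigma)!}\, \langle f_i, f_j\rangle,
\]
which vanishes for $i \neq j$, giving the desired orthogonality.

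The main obstacle is the careful justification of the adjoint relation $\lbar{}^* = -\LL{}$ on $L^2(\3)$, which requires an integration-by-parts argument on $\3$ with its induced surface measure and some care with tangential directions. Once that identity is secured, everything else reduces to the essentially routine $\mathfrak{sl}_2$-calculation carried out in Steps~1 and~2.
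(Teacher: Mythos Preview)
Your argument is correct and coincides with the proof sketched in the paper (which defers to \cite{REU17}): the two essential ingredients singled out there are precisely your adjoint relation $\lbar{}^* = -\LL{}$ and the fact that $\LL{}\lbar{}$ acts on $\hpq(\3)$ by a bidegree-dependent scalar, which is exactly the content of your $\mathfrak{sl}_2$ commutator identity. Your $\mathfrak{sl}_2$ packaging is a clean way to organize the induction, but the underlying computation---moving $\LL{}$ past powers of $\lbar{}$ and reducing $\langle \lbar{}^\sigma f_i, \lbar{}^\sigma f_j\rangle$ to a scalar multiple of $\langle f_i, f_j\rangle$---is the same ``induction on inner product'' the paper indicates; one small point to tighten in your Step~3 is that skew-adjointness of the real vector fields $X,Y$ on $\3$ requires them to be divergence-free, which holds here because $\LL{}$ and $\lbar{}$ generate isometries of $\3$.
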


The proof of Theorem $\ref{orthBasis}$ follows from induction on inner product. The main two steps are the fact that $-\LL{}$ is the adjoint of $\lbar{}$, and that $\LL{}\lbar{}$ scales elements of $\hpq(\3)$ by a constant factor based on their bidegree.

Now one can consider an orthogonal basis $\{ f_0, \ldots, f_k \}$ for $\HH_{0, k} (\3)$ and define the following two subspaces for even $k$: \\
\begin{align*}
V_i &= \mathrm{span} \{ f_i, \lbar{}^2 f_i, \lbar{}^4 f_i, \ldots, \lbar{}^{k-2} f_i, \lbar{}^k f_i \}, \\
W_i &= \mathrm{span} \{ \lbar{} f_i, \lbar{}^3 f_i, \lbar{}^5 f_i, \ldots, \lbar{}^{k-3} f_i, \lbar{}^{k-1} f_i \},
\end{align*}
and similarly for odd $k$:
\begin{align*}
V_i &= \mathrm{span} \{ f_i, \lbar{}^2 f_i, \lbar{}^4 f_i, \ldots, \lbar{}^{k-3} f_i, \lbar{}^{k-1} f_i \}, \\
W_i &= \mathrm{span} \{ \lbar{} f_i, \lbar{}^3 f_i, \lbar{}^5 f_i, \ldots, \lbar{}^{k-2} f_i, \lbar{}^{k} f_i \}.
\end{align*}
The motivation to define such spaces follows by inspecting the expanded form of $\boxbt$, which is equal to $\LL{} \lbar{} + \lbar{} \LL{} + \LL{}^2 + \lbar{}^2$ up to constants. Previous work has shown that $ \LL{} \lbar{}, \lbar{} \LL{}$ scales elements of $\hpq(\3)$ by a constant factor; and the action of $\LL{}^2, \lbar{}^2$ suggests that invariant subspaces will involve basis elements that differ by $2j$ applications of $\lbar{}$. Indeed, it is shown in \cite{REU17} that $\boxbt$ is invariant on $V_i$ and $W_i$. On these finite dimensional invariant subspaces one can obtain a matrix representation for the second order operator $\boxbt$.

\begin{theorem}{\label{matrixRep1}}\cite{REU17}
The matrix representation of $\boxbt$, $m(\square_b^t)$, on $V_i, W_i \subset \HH_k(\3)$ is 

\[ h \begin{pmatrix}
d_1 & u_1 & & & \\
-\overline{t} & d_2 & u_2 & & \\
& -\overline{t} & d_3 & \ddots & \\
& & \ddots & \ddots & u_{k-1} \\
& & & -\overline{t} & d_k
\end{pmatrix} \]
where $h$ is a constant and
on $V_i$, $u_j = -4t \cdot (j)(2j-1)(k-j)(2k-1-2j)$ and $d_j = (2j-1)(2k+1-2j) + 4|t|^2(j-1)(k+1-j)$;
on $W_i$, $u_j = -4t \cdot (j)(2j+1)(k-j)(2k-1-2j)$ and $d_j = 4(j)(k-j) + |t|^2 (2j-1)(2k+1-2j)$. Moreover, the matrix above is similar to
\[ B = \begin{pmatrix}
d_1 & c_1 & & & \\
c_1 & d_2 & c_2 & & \\
& c_2 & d_3 & \ddots & \\
& & \ddots & \ddots & c_{k-1} \\
& & & c_{k-1} & d_k 
\end{pmatrix} \]
where $c_j = (-\overline{t} \cdot u_j)^{1/2} = |t| \; \sqrt[]{-u_j/t}$.
\end{theorem}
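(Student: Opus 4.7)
The plan is to expand $\boxbt$ into four summands tractable on spherical harmonics, show invariance of $V_i$ and $W_i$, and read off the matrix entries column by column. Substituting $\lt=\LL{}+\overline{t}\,\lbar$ and $\lbar_t=\lbar+t\,\LL{}$ into the defining formula yields
\begin{equation*}
\boxbt=-\frac{1+|t|^{2}}{(1-|t|^{2})^{2}}\bigl(\LL{}\lbar+|t|^{2}\lbar\LL{}+t\,\LL{}^{2}+\overline{t}\,\lbar^{2}\bigr),
\end{equation*}
so the overall constant $h=-(1+|t|^{2})/(1-|t|^{2})^{2}$ factors out. The operators $\LL{}\lbar$ and $\lbar\LL{}$ preserve bidegree and act by scalars $\alpha(p,q)$ and $\beta(p,q)$ on $\hpq(\mathbb{S}^{3})$, computable in the style of Theorem~\ref{Fol} via the Euler relations and the sphere constraint $|z_{1}|^{2}+|z_{2}|^{2}=1$. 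The other two operators shift bidegree: $\LL{}^{2}\colon\hpq\to\mathcal{H}_{p-2,q+2}$ and $\lbar^{2}\colon\hpq\to\mathcal{H}_{p+2,q-2}$.

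The second, most delicate step is to verify that $V_{i}$ and $W_{i}$ are $\boxbt$-invariant and to evaluate $\LL{}^{2}(\lbar^{m}f_{i})$ in closed form. The two scalar summands clearly preserve each layer $\lbar^{m}f_{i}$, and $\overline{t}\,\lbar^{2}$ sends $\lbar^{m}f_{i}$ to $\lbar^{m+2}f_{i}$. The hard piece is $\LL{}^{2}(\lbar^{m}f_{i})$: by Theorem~\ref{orthBasis} the target layer is one-dimensional within the span generated by $f_{i}$, so the result is forced to be a scalar multiple of $\lbar^{m-2}f_{i}$, but the precise constant is needed. I would extract it by iterating the commutation identity $\LL{}\,\lbar^{m}=\lbar^{m}\,\LL{}+\sum_{\ell=0}^{m-1}\lbar^{\ell}[\LL{},\lbar]\,\lbar^{m-1-\ell}$, where $[\LL{},\lbar]=\LL{}\lbar-\lbar\LL{}$ acts on each intermediate bidegree by the already-recorded scalar $\alpha-\beta$; applying this twice collapses $\LL{}^{2}\lbar^{m}f_{i}$ onto $\lbar^{m-2}f_{i}$ with a scalar that, after bookkeeping, factors as a product of four linear factors in $j$ and $k$. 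This is the main obstacle.

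Third, I would read off the matrix entries by applying $\boxbt$ to $\lbar^{2(j-1)}f_{i}$ on $V_{i}$ (respectively $\lbar^{2j-1}f_{i}$ on $W_{i}$). In the unnormalized basis this produces a diagonal entry from $\LL{}\lbar+|t|^{2}\lbar\LL{}$, reproducing $d_{j}$; a subdiagonal entry $-\overline{t}$ from $\overline{t}\,\lbar^{2}$; and a superdiagonal entry $u_{j}$ from $t\,\LL{}^{2}$ equal to $t$ times the descent scalar from the previous paragraph. Plugging the bidegree $(2(j-1),k-2(j-1))$ (resp.\ $(2j-1,k-2j+1)$) into $\alpha$, $\beta$ and the descent scalar reproduces the stated expressions for $d_{j}$ and $u_{j}$; the $V_{i}$ versus $W_{i}$ discrepancy stems entirely from the parity shift in the starting bidegree.

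Finally, the similarity to $B$ is purely algebraic: any tridiagonal matrix whose opposing off-diagonal products $(-\overline{t})\cdot u_{j}$ are nonzero is conjugate, via a diagonal rescaling $D=\mathrm{diag}(1,\delta_{1},\delta_{1}\delta_{2},\ldots)$, to a symmetric tridiagonal matrix with common off-diagonal entry $c_{j}$ satisfying $c_{j}^{2}=-\overline{t}\,u_{j}$, hence $c_{j}=|t|\sqrt{-u_{j}/t}$, as claimed.
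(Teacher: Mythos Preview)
Your proposal is correct and follows precisely the strategy the paper itself sketches in the paragraphs preceding the theorem: expand $\lt\lbar_t$ into the four pieces $\LL{}\lbar$, $|t|^{2}\lbar\LL{}$, $t\,\LL{}^{2}$, $\overline{t}\,\lbar^{2}$; use that the first two act as scalars on each $\hpq(\mathbb{S}^{3})$; and track the bidegree shifts of the last two to obtain the tridiagonal form on $V_i$ and $W_i$. The paper does not actually carry out the proof here---the result is quoted from \cite{REU17} and the paper explicitly says ``one can refer to \cite{REU17} for details''---so there is no in-paper argument to compare against beyond that sketch. Your commutation-identity bookkeeping for $\LL{}^{2}(\lbar^{m}f_{i})$ (using $\LL{}f_{i}=0$ and the scalar action of $[\LL{},\lbar]$ on each bidegree layer) and the diagonal-conjugation argument for the symmetric form $B$ are exactly the expected steps. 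One small remark: the sentence ``the target layer is one-dimensional within the span generated by $f_{i}$, so the result is forced to be a scalar multiple of $\lbar^{m-2}f_{i}$'' is not by itself a proof that $\LL{}^{2}$ stays in the $f_{i}$-chain, but your subsequent commutator computation does establish this rigorously, so there is no gap.
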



After recalling these results, we also introduce the numerical range of a matrix.
\begin{definition}
Given a $n\times n$ square matrix $A$, we define its numerical range $W(A) = \{  \langle Ax, x \rangle; \; x\in \Cn , ||x|| = 1 \} $.
\end{definition}

Also recall that $\lambda_k^{\max}$ denotes the maximum eigenvalue of $m(\square_b^t)$ on $\mathcal{H}_k(\3)$. We first prove the following lower bound.

\begin{lemma}{\label{evalLB}}
There exists a real constant $c > 0$ so that $ \frac{1}{c} k^2 \leq \lambda_k^{\max}$, i.e. $\lambda_k^{\max}=\Omega(k^2)$ 
\end{lemma}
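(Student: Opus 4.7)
The plan is to use Theorem \ref{matrixRep1} to reduce to a finite-dimensional Hermitian matrix and extract a lower bound for its largest eigenvalue from one carefully chosen diagonal entry. On any invariant subspace $V_i \subset \HH_k(\3)$, the restriction of $\boxbt$ is represented by $hM$, which by the theorem is similar to the real symmetric tridiagonal matrix $hB$; the spectra of $\boxbt|_{V_i}$ and of $hB$ therefore coincide. Because $hB$ is Hermitian, the Rayleigh--Ritz characterization yields
\[
\lambda_k^{\max} \;\geq\; \sup_{\|x\|=1}\langle hBx, x\rangle \;\geq\; h\,\langle Be_j, e_j\rangle \;=\; h\,d_j
\]
for every coordinate vector $e_j$. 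It thus suffices to produce a single admissible index $j$ with $d_j$ of order $k^2$.

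On $V_i$ the diagonal entries are
\[
d_j \;=\; (2j-1)(2k+1-2j) \,+\, 4|t|^2(j-1)(k+1-j),
\]
a sum of two nonnegative terms. The first summand is a concave quadratic in $j$ maximized at $j = (k+1)/2$ with value $k^2$; taking $j$ equal to the nearest integer to $(k+1)/2$ (which lies within the range of matrix indices) gives $d_j \geq (k-1)(k+1) = k^2 - 1$. Hence
\[
\lambda_k^{\max} \;\geq\; h\,(k^2 - 1) \;=\; \Omega(k^2),
\]
as desired.

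The only delicate point is verifying that the constant $h$ in Theorem \ref{matrixRep1} is positive and independent of $k$. This should follow from its origin in the fixed coefficient $\frac{1+|t|^2}{(1-|t|^2)^2}$ in the definition of $\boxbt$, together with the normalization factors converting the orthogonal basis $\{\lbar{}^{2j}f_i\}$ of $V_i$ (and its odd analogue on $W_i$) to an orthonormal one — all depending only on $|t|$. Beyond this, the argument is elementary: it uses only the variational characterization of the top eigenvalue of a Hermitian matrix and an arithmetic estimate on a quadratic in $j$. A matching upper bound $\lambda_k^{\max} = O(k^2)$, by contrast, would require controlling the full spectrum of the tridiagonal matrix rather than a single diagonal entry, and is a considerably more involved task.
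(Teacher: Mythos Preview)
Your proof is correct and follows essentially the same strategy as the paper: reduce to the Hermitian tridiagonal matrix $B$ from Theorem~\ref{matrixRep1} and bound $\lambda_k^{\max}$ from below by a single diagonal entry $d_j$ with $j$ chosen near the middle, via the Rayleigh--Ritz (equivalently, numerical range) characterization. The only cosmetic difference is that the paper works on $W_i$ and evaluates $d_{k/2} = k^2 + |t|^2(k^2-1)$, whereas you work on $V_i$ and obtain $d_j \geq k^2-1$; either choice yields $\Omega(k^2)$. Your explicit caveat about the constant $h$ is well placed---the paper silently absorbs it---and your identification of its source in the fixed scalar $\frac{1+|t|^2}{(1-|t|^2)^2}$ is correct, so $h$ is indeed a positive constant depending only on $|t|$ and not on $k$.
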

\begin{proof}
For a square matrix $A$, $\sup W(A)$ is an upper bound for the eigenvalues of $A$. Furthermore, if $A$ is Hermitian  then the maximum eigenvalue equals $\sup W(A)$. 

Let $A = m(\boxbt) $ on $W_i$. By the above discussion, since $A$ is similar to a Hermitian matrix $B$, it suffices to show that $\sup W(B) = \Omega(k^2)$. 

Fix $x = e_{k/2}$ for $k$ even, and $x = e_{(k+1)/2}$ for $k$ odd. Since $\langle Be_i, e_j \rangle = a_{ij}'$, by the above matrix representation we have that for $k$ even,
\begin{align*}
\langle B e_{k/2}, e_{k/2} \rangle &= B_{k/2, k/2} \\
&= d_{\frac{k}{2}} \\
&= 4 \Big(\frac{k}{2} \Big) \Big(k- \frac{k}{2} \Big) + |t|^2 \Big(2 \frac{k}{2}-1 \Big) \Big(2k+1-2  \frac{k}{2} \Big) \\
&= k^2 + |t|^2 (k-1)(k+1) \\
&= \Omega(k^2).
\end{align*}
A similar result follows for $k$ odd. Now since $\langle B e_{k/2}, e_{k/2} \rangle \in W(B)$, we have $\sup W(B) = \Omega(k^2)$.
\end{proof}

For the lower bound we invoke Ger\u{s}gorin's circle theorem.

\begin{theorem}{\label{gersh}} \cite{gersh}
Suppose $A$ is a complex square matrix, and $R_i$ is the sum of the absolute values of the off-diagonal entries in the $i^{th}$ row. Then every eigenvalue of $A$ must lie within one of the closed discs $D(a_{ii},R_i) \subset \mathbb \CC$. 
\end{theorem}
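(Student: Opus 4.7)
The plan is a short classical argument that pivots on the largest coordinate of an eigenvector. Gershgorin's theorem is a standalone linear-algebra statement that does not use any of the Kohn Laplacian machinery developed earlier in the paper, so the proof proceeds by unpacking the definition of eigenvalue directly.

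First I would fix an eigenvalue $\lambda$ of $A$ and a corresponding eigenvector $x = (x_1, \dots, x_n)^T \neq 0$. The key step, which is the only real idea in the argument, is to single out an index $i$ that maximizes $|x_i|$, that is, $|x_i| = \max_{1 \le j \le n} |x_j|$. Since $x$ is nonzero, this maximum is strictly positive, so later division by $|x_i|$ will be legitimate.

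Next I would extract the $i$-th component of the equation $Ax = \lambda x$, namely $\sum_{j=1}^n a_{ij} x_j = \lambda x_i$, and separate the diagonal term from the off-diagonal terms to obtain
\[ (\lambda - a_{ii})\,x_i = \sum_{j \neq i} a_{ij}\, x_j. \]
Taking absolute values, applying the triangle inequality, and then using $|x_j| \leq |x_i|$ for every $j$ gives
\[ |\lambda - a_{ii}|\,|x_i| \;\leq\; \sum_{j \neq i} |a_{ij}|\,|x_j| \;\leq\; |x_i| \sum_{j \neq i} |a_{ij}| \;=\; R_i\,|x_i|. \]
Dividing by the strictly positive quantity $|x_i|$ yields $|\lambda - a_{ii}| \le R_i$, which places $\lambda$ in the closed disc $D(a_{ii}, R_i)$, as claimed.

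There is no substantive obstacle in this proof; the only conceptual ingredient is the observation that extracting the equation indexed by the maximum-modulus coordinate is exactly what makes the triangle inequality sharp enough to produce a disc bound. The real payoff is downstream: combined with the explicit tridiagonal representation from Theorem \ref{matrixRep1}, each row has at most two off-diagonal entries whose moduli grow like $O(k^2)$, so Gershgorin will furnish the matching upper bound $\lambda_k^{\max} = O(k^2)$ needed to complement Lemma \ref{evalLB}.
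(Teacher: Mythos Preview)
Your argument is the standard, correct proof of Ger\u{s}gorin's circle theorem, and nothing in it is flawed. Note, however, that the paper does not actually prove this statement: Theorem~\ref{gersh} is simply quoted with a citation to \cite{gersh} and then applied in Lemma~\ref{evalUB}. So there is no ``paper's own proof'' to compare against; you have supplied a complete classical proof where the authors chose to cite the literature instead. Your closing remark about the downstream use in the tridiagonal setting accurately reflects how the paper invokes the result.
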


Recall that $m(\boxbt)$ on $V_i, W_i$ is similar to the real symmetric matrix $B$. Since $B$ is Hermitian, Theorem $\ref{gersh}$ will give us interval bounds on the real line. Furthermore, the tridiagonal structure of $B$ makes these bounds tight.


\begin{lemma}{\label{evalUB}}
There exists a real constant $c > 0$ so that $\lambda_k^{\max} \leq ck^2$, i.e. $\lambda_k^{\max} = O(k^2)$.
\end{lemma}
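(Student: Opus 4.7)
The plan is to apply Ger\u{s}gorin's circle theorem (Theorem \ref{gersh}) to the Hermitian tridiagonal matrix $B$ from Theorem \ref{matrixRep1}. Since the matrix representation of $\boxbt$ on each invariant subspace $V_i$ or $W_i$ is similar to $B$, and $\HH_k(\3)$ decomposes as a direct sum of such subspaces by Theorem \ref{orthBasis}, it suffices to bound every eigenvalue of every such $B$ by $O(k^2)$, uniformly in $i$ and in the choice of $V_i$ versus $W_i$.

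The first step is to size the entries of $B$ in terms of $k$. Each diagonal entry $d_j$ is, in both the $V_i$ and the $W_i$ case, a sum of two products of two factors each linear in $k$ for the relevant range of $j$, so $|d_j| = O(k^2)$ uniformly in $j$. For the off-diagonal entries,
\[ c_j = |t|\sqrt{-u_j/t}, \]
and in both cases $-u_j/t$ is $4$ times a product of four factors each $O(k)$, giving $c_j = O(k^2)$ uniformly in $j$.

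Next, I would apply Ger\u{s}gorin to the tridiagonal $B$: the $i$-th Ger\u{s}gorin disc has center $d_i$ and radius equal to the sum of the at most two nonzero off-diagonal entries in row $i$ (with the convention $c_0 = c_{\dim B} = 0$), so its radius is $O(k^2)$. Thus every eigenvalue $\lambda$ of $B$ satisfies $|\lambda| \leq |d_i| + |c_{i-1}| + |c_i| = O(k^2)$ for some $i$. Taking the maximum over the finitely many invariant subspaces comprising $\HH_k(\3)$ preserves this bound, giving $\lambda_k^{\max} = O(k^2)$.

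The only real obstacle is bookkeeping: one must verify the entry estimates uniformly in $i$, $j$, both parities of $k$, and both kinds of invariant subspace, and confirm that the implicit constants can be absorbed into a single $c > 0$. Since the polynomials defining $d_j$ and $u_j$ are explicit and of bounded degree in $k$, this reduces to inspecting a handful of formulas, so no substantive analytic obstacle arises — the tight tridiagonal structure of $B$ is exactly what makes Ger\u{s}gorin sharp enough to match the quadratic lower bound from Lemma \ref{evalLB}.
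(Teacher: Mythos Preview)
Your proposal is correct and follows essentially the same argument as the paper: apply Ger\u{s}gorin's theorem to the tridiagonal Hermitian matrix $B$, observe that each row has at most two off-diagonal entries $c_{i-1}, c_i = O(k^2)$ (since $u_j = O(k^4)$) and diagonal entry $d_i = O(k^2)$, and conclude that every eigenvalue lies in an interval of size $O(k^2)$. Your write-up is in fact slightly more careful than the paper's in explicitly noting uniformity over $j$, over both parities, and over the choice of $V_i$ versus $W_i$.
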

\begin{proof}
Applying Theorem \ref{gersh} on $B$, we have that 
\[ D(b_{ii}, R_i) =\Big( d_i -(c_{i-1} + c_i),d_i +(c_{i-1} + c_i) \Big), \]
since the $i^{th}$ row of $B$ has only two off-diagonal entries, $c_{i-1}$ and $c_i$, both of which are non-negative by Theorem \ref{matrixRep1}. Note that for the extremal cases of the first and last row, the radii of these discs will involve only one off-diagonal entry. Now it suffices to show that an upper bound for $M_i = d_i +c_{i-1} + c_i$ is $O(k^2)$. By inspection, $c_{i-1},$ and $c_i$ are $O(k^2)$ because $u_{i-1}, u_{i}$ are $O(k^4)$. Since $d_i$ is $O(k^2)$ as well, we have our result. 
\end{proof}

By combining the last two lemmas we obtain the following statement.
\begin{theorem}
There exists a real $c > 0$ so that $\frac{1}{c}k^2 \leq \lambda_k^{\max} \leq ck^2$, i.e. $\lambda_k^{\max} = \Theta(k^2)$.
\end{theorem}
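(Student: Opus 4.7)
The plan is essentially bookkeeping: the theorem is the conjunction of Lemma \ref{evalLB} and Lemma \ref{evalUB}, so the proof is simply to invoke both and read off the conclusion. The only care needed is to make sure the two lemmas apply to the same object $\lambda_k^{\max}$ (the maximum eigenvalue of $\boxbt$ on $\HH_k(\3)$) and that the constants in the $\Omega$ and $O$ notation combine in the expected way to give a two-sided bound.

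Concretely, I would first recall the definitions: $\lambda_k^{\max}$ is the largest eigenvalue of the matrix representation $m(\boxbt)$ restricted to the invariant subspaces $V_i, W_i$ that exhaust $\HH_k(\3)$, so it is the maximum over all these blocks of the largest eigenvalue. Lemma \ref{evalLB} supplies a constant $c_1 > 0$ with $\lambda_k^{\max} \ge \tfrac{1}{c_1} k^2$ for all sufficiently large $k$ (obtained via the numerical range estimate $\langle Be_{k/2}, e_{k/2}\rangle = \Omega(k^2)$ on one of the $W_i$ blocks, which in particular is a lower bound for the largest eigenvalue of the full space $\HH_k(\3)$). Lemma \ref{evalUB} supplies a constant $c_2 > 0$ with $\lambda_k^{\max} \le c_2 k^2$ (obtained via Ger\v sgorin applied to the Hermitian matrix $B$ similar to $m(\boxbt)$, using that $d_i, c_{i-1}, c_i$ are all $O(k^2)$ uniformly in $i$).

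Taking $c = \max(c_1, c_2)$ then immediately yields $\tfrac{1}{c} k^2 \le \lambda_k^{\max} \le c k^2$, which is the statement $\lambda_k^{\max} = \Theta(k^2)$. There is no real obstacle here; the work has all been done in the two preceding lemmas, and the only subtlety worth flagging is that the Ger\v sgorin bound in Lemma \ref{evalUB} must be taken uniformly over all blocks $V_i, W_i$ and all row indices $i$, but since the entries $d_i, u_i$ from Theorem \ref{matrixRep1} are polynomial in $k$ of degree at most $2$ (resp.\ $4$), the bound $M_i = O(k^2)$ is indeed uniform, and the maximum eigenvalue over all blocks inherits the same order. I would close by remarking, as the authors do for $N(m)$, that the implicit constants depend on $t$ (through $|t| < 1$) but not on $k$.
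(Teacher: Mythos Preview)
Your proposal is correct and matches the paper's approach exactly: the paper simply states that the theorem follows by combining Lemma~\ref{evalLB} and Lemma~\ref{evalUB}, which is precisely what you do (with the harmless extra step of taking $c=\max(c_1,c_2)$). Your remarks on uniformity over the blocks $V_i,W_i$ and the dependence of the constant on $t$ are reasonable additions but not required for the argument as given.
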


In addition to the asymptotics $\lambda_k^{\max}$, we computed $\lambda_k^{\max}$ explicitly by using \texttt{SymPy}. Similar codes also work to compute the largest eigenvalues of other operators, such as $\heart_b$, on finite dimensional invariant spaces.

Finally we note that, in this section we studied perturbed Kohn Laplacians on $\3$. One can define similar perturbations on higher dimensional spheres and investigate the corresponding spectra. Although in higher dimensions Boutet de Monvel theorem \cite{Monvel75} guarantees embeddibility of strongly pseudoconvex abstract CR manifolds, it would be still worthwhile to compute the distribution of eigenvalues.

\section*{Acknowledgements}
We thank the anonymous referee for constructive comments. This research was conducted at the NSF REU Site (DMS-1659203) in Mathematical Analysis and Applications at the University of Michigan-Dearborn. We would like to thank the National Science Foundation, National Security Agency, and University of Michigan-Dearborn for their support. 


\end{document}